%% file: ex_article.tex
\documentclass[hidelinks,onefignum,onetabnum]{siamart251216}

\input{ex_shared}

\newcommand{\LRs}[1]{\left[ #1 \right]}  
\newcommand{\LRp}[1]{\left( #1 \right)}  
\newcommand{\LRc}[1]{\left\{ #1 \right\}} 
\newcommand{\nor}[1]{\left\| #1 \right\|}
\newcommand{\snor}[1]{\left| #1 \right|}

\newcommand{\mc}[1]{\mathcal{#1}}
\renewcommand{\epsilon}{\varepsilon}

\renewcommand{\L}{\mathcal{L}}
\usepackage{xcolor}
\usepackage{rotating}
\usepackage{tcolorbox}
\newenvironment{question}{
\begin{tcolorbox}[colframe=blue!75!black, colback=blue!10!white]
\begin{center}
}
{
\end{center}
\end{tcolorbox}
}

\newenvironment{idea}{
\begin{tcolorbox}[colframe=green!60!black, colback=green!10!white, title={\bf Idea}]
\begin{center}
}
{
\end{center}
\end{tcolorbox}
}

\ifpdf
\hypersetup{
  pdftitle={From an Elementary Proof of Error Representation for Hermite Quadrature to a Rediscovery of Legendre Polynomials and Rodrigues Formula},
  pdfauthor={Tan Bui-Thanh, Giancarlo Villatoro, and C. G. Krishnanunni}
}
\fi

\begin{document}

\maketitle

\begin{abstract}
We generalize  two-point interpolatory Hermite quadrature to functions with available values and the first $n-1$ derivatives at both end points.
Armed with integration by parts in the reverse form we provide an elementary derivation of an exact error represenation of Hermite quadrature rule. This approach possesses several advantages over the classical approaches: i) Only integration by parts is needed for the derivation; ii) the error representation requires much milder regularity,  namely the existence of $n$th-order derivative rather than a $(2n)$th-order derivative of the function under consideration. As a result, our error formula is valid for less regular functions for which the classical ones are not valid; iii) our approach rediscovers Legendre polynomials and more interestingly it provides a surprisingly elegant relation between Legendre polynomial and Hermite interpolation. In particular, Legendre polynomials are precisely the error kernels for interpolatory Hermite quadrature rules; and iv) We also rediscover the Rodrigues formula for Legendre polynomials as part of our findings.  For those who are interested in a different proof of the exact error representation for Hermite quadrature rule, we provide an alternative proof using the Peano kernel theorem. We also provide a composite interpolatory Hermite quadrature rule for practical applications.
\end{abstract}

\begin{keywords}
Hermite interpolation, Hermite quadrature, Legendre Polynomials, Rodrigues Formula
\end{keywords}

\begin{MSCcodes}
65D30, 33C45
\end{MSCcodes}

\section{Introduction}
Numerical integration is a cornerstone of scientific computing, and the Newton--Cotes formulas are among the most widely used techniques. 
These methods approximate the integral of a function using only its values at selected points in the interval. 
The general  Newton--Cotes formula for approximating the integral of $f(x)$ on $[a,\ b]$ takes the form \cite{quarteroni2006numerical}:
\begin{equation}
\int_a^b f(x)\, dx= \sum_{i=0}^n w_i f(x_i)+\mathrm{error}, 
\label{newton_cotes}
\end{equation}
where $w_i$ are the weights, $\{x_i\}_{i=0}^n$ are $(n+1)$ equally spaced points in the interval $[a,\ b]$ with $x_0=a$ and $x_{n}=b$, and it is assumed that the values $f(x_i)$ are known at these points. 
For instance, the case $n=1$ is referred to as the \emph{trapezoidal rule}, and $n=2$ corresponds to \emph{Simpson's rule}. 
A conventional derivation of the error estimates for the trapezoidal and Simpson's rules is based on Newton's divided differences formula and the integral mean value theorem \cite{atkinson}. 
A much simpler technique, relying on reverse integration by parts, was developed by Cruz et al.\ \cite{cruz2003elementary} to derive error bounds for the trapezoidal rule. 
Subsequent work extended this approach to obtain analogous estimates for Simpson's rule \cite{ELSINGERJASONR.2007AEPO, HaiD.D.2008AEPo}. 
This method has attracted our attention due to its simplicity, requiring only an elementary background in calculus to derive an exact error representation. 
Moreover, it  requires less regularity, and thus valid for broader classes of functions, in contrast to the conventional (classical) approach.

In this work, we consider the setting in which, in addition to the function values $f(x_i)$, one also has access to derivative information at these points.  
The natural extension for quadrature in this case is to first use Hermite interpolation \cite{BirkhoffDeBoor1964,Schumaker1973,Lourakis2007,GasperRahman2004,Davis1975,StoerBulirsch1993} for $f\LRp{x}$, and then integrate. 
In particular, we define and investigate the following Hermite quadrature rule:
\begin{equation}
  \int_a^b f(x)\, dx=\int_a^b H_n(f;\ x)\, dx+E_n, 
  \label{eq:hermiteQuadrature}
\end{equation}
where $H_n(f;\ x)$ is the two-point Hermite interpolating polynomial constructed using the following information on the function and its $n-1$ derivative values at the two end points: 
$f(a),\ f^{(1)}(a),\dots, f^{(n-1)}(a)$ and $f(b),\ f^{(1)}(b),\dots ,f^{(n-1)}(b)$, and $E_n$ is the corresponding integration error by replacing $f(x)$ with its Hermite interpolation $H_n\LRp{f;\ x}$. This quadrature rule belongs to a larger class of interpolatory quadature rules \cite{Scott2011NumericalAnalysis}.

To motivate why \cref{eq:hermiteQuadrature} can provide a better approximation to the integral than the trapezoidal rule in \cref{newton_cotes}, let us consider an example. 
Consider the function $f(x)=x^2\sin{(x)}$ on the interval $[0,\ \pi]$, as shown in \cref{area}. 
Given $f(0)$, $f(\pi)$, $f^{(1)}(0)$ and $f^{(1)}(\pi)$, our aim is to approximate the area under the curve (shaded region in \cref{area}) in two different ways: 
(i) using the Newton--Cotes formula \cref{newton_cotes}; and 
(ii) using the Hermite quadrature \cref{eq:hermiteQuadrature}. 
We have the following estimates for the integral: 
\begin{equation}
\begin{aligned}
    & \int_0^\pi f(x)\, dx=\pi^2-4,\quad \mathrm{(true\ value)},\\
    &\int_0^\pi f(x)\, dx\approx \frac{\pi-0}{2}\LRp{f(0)+f(\pi)}=0,\quad \mathrm{(trapezoidal\ rule)},\\
    &\int_0^\pi f(x)\, dx\approx \int_0^\pi H_2(f;\ x)\, dx=\int_0^\pi -(x^3-\pi x^2)\, dx=\frac{\pi^4}{12},\quad \mathrm{(Hermite)},
    \end{aligned}
    \label{estimates}
\end{equation} 
where $H_n(f;\ x)$ is the two-point Hermite interpolating polynomial \cite{alma991022147289706011}. 
Note that from \cref{estimates} the error between the trapezoidal rule estimate and the true value is $\pi^2-4\approx 5.869$, whereas the error between the Hermite quadrature estimate and the true value is approximately $2.247$, which is  noticeably more accurate.
This improvement is due to its use of both function and derivative values, which allows it to capture more information about the function's behavior over the interval.
Thus, in scenarios where derivative information is available or easy to compute, Hermite-based quadrature rules present a compelling alternative to classical Newton--Cotes methods. 

\begin{figure}[h!]      
          \centering
          \includegraphics[scale=0.5]{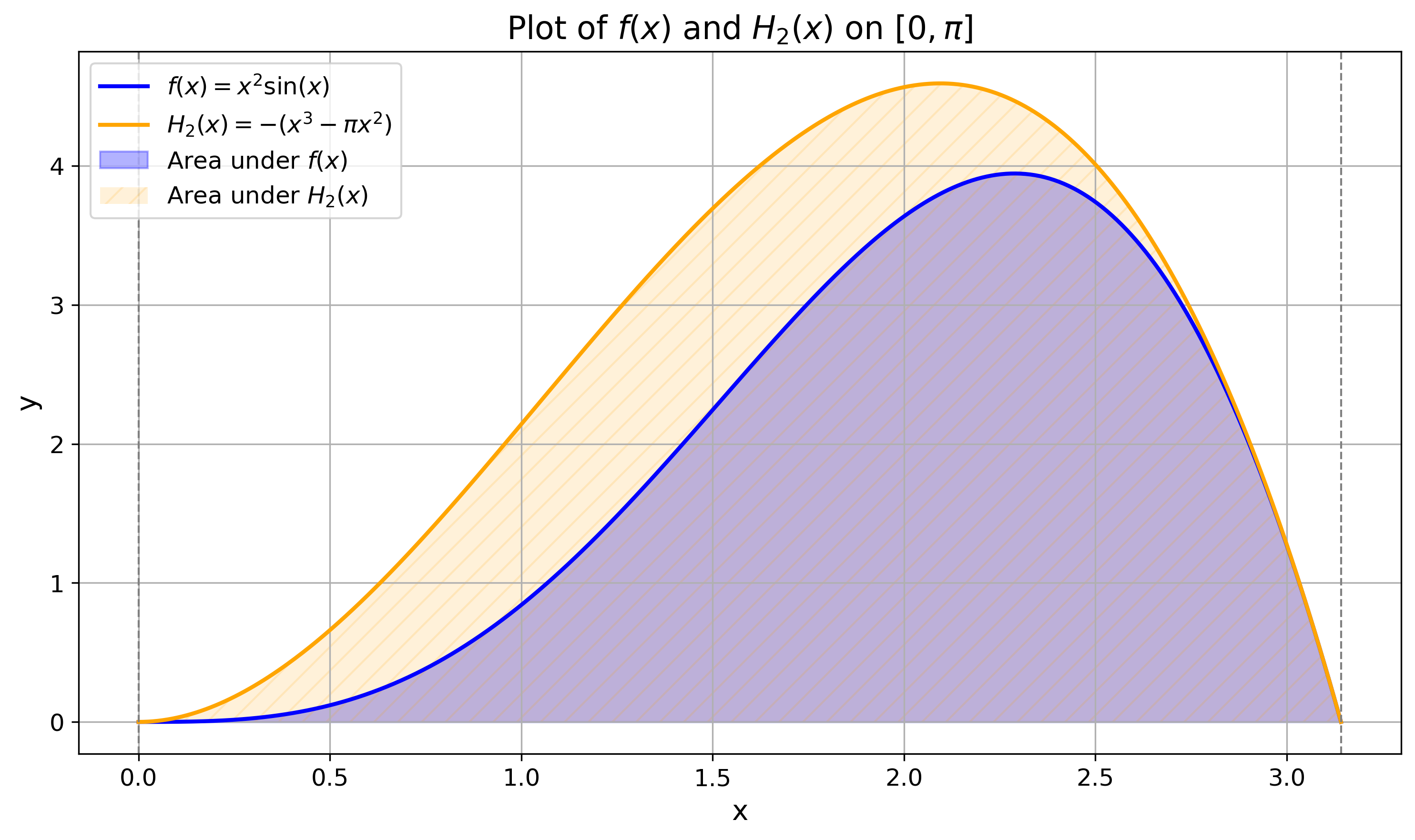}
    \caption{Plot of the function $f(x)=x^2\sin{(x)}$ with the shaded region denoting the area under the curve between $a=0$ and $b=\pi$. Note that $\int_a^b f(x)\, dx$ equals the area under the curve. The curve corresponding to the trapezoidal rule is identically $0$ and therefore does not appear in the plot.}
  \label{area}
\end{figure}

In the example considered above, the true value of the integral was available, which allowed us to compute the error for different quadrature rules. 
In practice, the true value of the integral is unavailable, and we are interested in deriving both exact error representations and their upper bounds.

 Classical expression, requiring the availability of $f^{(2n)}$, for the error in \cref{eq:hermiteQuadrature} is given in \cite{alma991022147289706011, atkinson}. 
Lampret et al.\ \cite{LampretVito2004AItH} considered a quadrature rule referred to as the ``composite Hermite rule'' for an interval $\LRs{a,b}$ with $k$ subintervals. It applies when both function values $f(a),\ f(a+\frac{b-a}{k}),\ f(a+2\frac{b-a}{k}),\dots ,f(b)$ and the endpoint derivatives $f^{(1)}(a),\ f^{(1)}(b)$ are available.
They also provided an expression for the error. 
Note that the case $k=1$ in Lampret et al.\ \cite{LampretVito2004AItH} corresponds to taking $n=2$ in \cref{eq:hermiteQuadrature}, and the resulting error bounds agree with those in \cite{alma991022147289706011, atkinson}. Two-point interpolatory Hermite quadrature rule with $n=2$ have been also considered and derived using the Peano kernel theorem \cite{Scott2011NumericalAnalysis}.
However,  these approaches require the availability of the $2n$-th derivative of the integrand. 

To the authors' best knowledge, the use of elementary techniques such as reverse integration by parts for deriving the error of Hermite quadrature rule \cref{eq:hermiteQuadrature} has not been previously investigated. 
While one reverse integration by parts  is sufficient to derive the error for the trapezoidal rule \cite{cruz2003elementary}, several additional technical developments and extensions are required to derive the error expression for the Hermite quadrature rule. First, by invoking the Beta function \cite{abromowitz1972handbook} and the General Leibniz Rule \cite{apostol1967one}, we derive the explicit formula for the Hermite quadrature rules. Second, 
we need to perform reverse integration by parts (RIBP) $n$ times in order to yield the desirable form for Hermite quadrature rule and its error. A key finding is that our error expression requires the existence of $n$-order derivative of the integrand instead of $2n$-order derivative. Our result is thus valid over larger classes of functions as oppose to the classical one.  Third, for the residual of the $n$ times RIPB to be the exact expression for Hermite quadrature rules, free parameters from RIPB must be identified to match Hermite quadrature rules.  Through several techinical theorems, we show that there is a unique set of parameters and provide a recursive formula for the parameters. Fourth is a pleasant surprise. In particular, by accident we rediscover Legendre polynomials through the Hermite quadrature error kernel. Another surprising finding is a rediscovery of the Rodrigues formula for the Legendre polynomials. Fifth, we provide an alternative derivation and proof for the error of the Hermite quadrature rule, when $2n$-order derivative is available, using the Peano kernel theorem. For practical purposes, a composite interpolatory Hermite quadrature rule was also provided.

\section{Hermite interpolating function and its integral}

Let $f(x)$ be a function defined on the interval $[a,b]$, where $a < b$. 
Suppose $f(a)$, $ f^{(1)}(a),\dots, f^{(n-1)}(a)$ and $f(b)$, $ f^{(1)}(b),\dots, f^{(n-1)}(b)$ are available. We can approximate $f(x)$  using the two-point Hermite interpolation polynomial.  
The two-point Hermite interpolation polynomial utilizing the aforementioned information is defined as follows
\cite{alma991022147289706011, atkinson}, assuming all the quantities exist:

\begin{definition}[Two-Point Hermite Interpolating Polynomial]
\label{defi:her_def}
    Let $a$ and $b$ be distinct. The Hermite interpolating polynomial is defined by:
    \begin{equation}
    \begin{aligned}
         &H_n(f;\ x) = (x-a)^{n}\sum_{k=0}^{n-1}\frac{B_k(x-b)^k}{k!} + (x-b)^{n}\sum_{k=0}^{n-1}\frac{A_k(x-a)^k}{k!},\\
        &\mathrm{with}\quad A_k = \frac{d^k}{dx^k}\LRs{\frac{f(x)}{(x-b)^{n}}}_{x=a}, \quad B_k = \frac{d^k}{dx^k}\LRs{\frac{f(x)}{(x-a)^{n}}}_{x=b}.
        \end{aligned}
        \label{her_def}
    \end{equation}
\end{definition}

It is easy to see that  $H_n(f;\ x)$ satisfies 
     \begin{align*}
         H_n(f;\ a) = f(a),\ H_n^{(1)}(f;\ a) = f^{(1)}(a), \dots, H_n^{(n-1)}(f;\ a) = f^{(n-1)}(a), \\
         H_n(f;\ b) = f(b),\ H_n^{(1)}(f;\ b) = f^{(1)}(b), \dots, H_n^{(n-1)}(f;\ b) = f^{(n-1)}(b),
     \end{align*}
and the classical error in interpolation can be shown to have this the following form \cite{alma991022147289706011, atkinson}:
     \begin{equation}
         f(x) - H_n(f;\ x) = \frac{f^{(2n)}(\varepsilon)}{(2n)!}(x-a)^n(x-b)^n,
    \label{her_error}
     \end{equation}
     where $\varepsilon := \varepsilon\LRp{x} \in [a,b]$. Consequently, the error for Hermite quadrature, as an example of interpolatory quadrature class \cite{Scott2011NumericalAnalysis}, using the classical interpolation error \cref{her_error} can be written as
     \begin{equation}
\int_a^b f(x)\ dx=\int_a^b H_n(f;\ x)\ dx+\int_a^b\frac{f^{(2n)}(\epsilon)}{(2n)!}(x-a)^n(x-b)^n\ dx. 
\label{int_h}
\end{equation}
For the classical Hermite interpolation error to be meaningful,  $(2n)$-th derivative of $f(x)$ needs to exist in some sense (e.g. a sufficient condition\footnote{$L_2\LRp{a,b}$ and $C\LRs{a,b}$ denote the space of square integrable functions on $\LRp{a,b}$ and continuous functions $\LRs{a,b}$, respectively.} is $f^{(2n)}\LRp{x} \in L_2\LRp{a,b}$ or  $f^{(2n)}\LRp{x} \in C\LRs{a,b}$).  
For certain functions such as 
\[ 
f(x) := \int_0^x \log\LRp{t-1/2}\,dt
\]
on the interval $[0,\ 1]$, the classical error \cref{her_error} does not exist for $n=1$, though the Hermite interpolation (simple linear interpolation for this case), and thus its error, is well-defined.

\begin{question}
    Can we  derive and bound the  Hermite quadrature error $E_n$ in \cref{eq:hermiteQuadrature} using only up to  $n$th-order derivative of $f\LRp{x}$?
\end{question}
\vspace{0.1 cm}

{{In the following, we provide an answer to the aforementioned question  by using elementary techniques.
To that end, let us first write $\int_a^b H_n(f;\ x)\, dx$ in \cref{int_h} in the form
\begin{equation}
\int_a^b H_n(f;\ x)\, dx=\sum_{j=0}^{n-1}w_j^a f^{(j)}(a)+\sum_{j=0}^{n-1}w_j^b f^{(j)}(b),    
\label{her_rewrite}
\end{equation}
where $f^{(j)}(a)$ denotes the $j$-th derivative of $f(x)$ evaluated at $x=a$, and $w_j^a,\ w_j^b$ are the weights to be determined.
\section{Hermite quadrature from Hermite interpolation}

\begin{proposition}
\label{first_p}
    Consider the Hermite interpolating polynomial in \cref{her_def} for a given $n$. 
    Then the weights $w_j^a$ and $w_j^b$ in \cref{her_rewrite} are given by:
\[ w_j^a=(b-a)^{j+1}n\sum_{k=j}^{n-1}\binom{k}{j}\frac{(n+k-j-1)!}{(n+k+1)!}, \quad
 w_j^b=(-1)^j w_j^a,
\]
where $\binom{k}{j}=\frac{k!}{j!(k-j)!}$. Consequently,
\[
\int_a^b H_n(f;\ x)\, dx=\sum_{j=0}^{n-1}w_j^a \LRs{f^{(j)}(a)+ (-1)^j f^{(j)}(b)}.
\]
\end{proposition}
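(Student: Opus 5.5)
We will integrate the representation \cref{her_def} term by term, expand the coefficients $A_k$ and $B_k$ with the General Leibniz Rule, and evaluate the resulting polynomial integrals as Beta functions.

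\emph{Step 1: the integrals.} Substituting $x=a+t(b-a)$ gives
\[
\int_a^b (x-a)^{m}(x-b)^{k}\,dx=(-1)^k (b-a)^{m+k+1}\frac{m!\,k!}{(m+k+1)!}.
\]
Hence
\[
\int_a^b (x-b)^n\frac{(x-a)^k}{k!}\,dx=(-1)^n(b-a)^{n+k+1}\frac{n!}{(n+k+1)!},
\]
and symmetrically
\[
\int_a^b (x-a)^n\frac{(x-b)^k}{k!}\,dx=(-1)^k(b-a)^{n+k+1}\frac{n!}{(n+k+1)!}.
\]

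\emph{Step 2: the coefficients.} By the Leibniz rule,
\[
A_k=\sum_{j=0}^k\binom{k}{j}f^{(j)}(a)\,\frac{d^{k-j}}{dx^{k-j}}(x-b)^{-n}\Big|_{x=a}.
\]
The derivative factor is
\[
\frac{d^{m}}{dx^{m}}(x-b)^{-n}=(-1)^m\frac{(n+m-1)!}{(n-1)!}(x-b)^{-n-m},
\]
which at $x=a$ equals $(-1)^m\frac{(n+m-1)!}{(n-1)!}(-1)^{n+m}(b-a)^{-n-m}=(-1)^n\frac{(n+m-1)!}{(n-1)!}(b-a)^{-n-m}$, with $m=k-j$. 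Similarly, $B_k$ has the derivative factor $(-1)^{m}\frac{(n+m-1)!}{(n-1)!}(b-a)^{-n-m}$.

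\emph{Step 3: collect the weights.} The $a$-part of the integral equals
\[
\sum_{k}\sum_{j\le k}\binom{k}{j}f^{(j)}(a)(-1)^n\frac{(n+k-j-1)!}{(n-1)!}(b-a)^{-n-k+j}\cdot(-1)^n(b-a)^{n+k+1}\frac{n!}{(n+k+1)!}.
\]
The signs cancel, and $n!/(n-1)!=n$, so the result is $\sum_j f^{(j)}(a)(b-a)^{j+1}n\sum_{k=j}^{n-1}\binom kj\frac{(n+k-j-1)!}{(n+k+1)!}$. We obtain this by swapping the order of summation, $\sum_{k=0}^{n-1}\sum_{j=0}^k=\sum_{j=0}^{n-1}\sum_{k=j}^{n-1}$. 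This gives $w_j^a$.

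For the $b$-part, the sign is $(-1)^{k-j}(-1)^k=(-1)^j$, since $(-1)^{-j}=(-1)^j$. Every other factor is identical to the $a$-part, so $w_j^b=(-1)^jw_j^a$. The final formula follows immediately.

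\emph{Main obstacle.} The work is bookkeeping: tracking the signs from $(a-b)^{-n-m}$ and from the Beta integral, together with the index exchange. We will carefully check the signs for small $n$ (e.g. $n=1$, which should give the trapezoidal weights $(b-a)/2$; and $n=2$, which should give $w_1^a=(b-a)^2/12$).
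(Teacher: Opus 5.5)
Your proposal is correct and follows the paper's proof essentially step for step. Both arguments evaluate the monomial integrals as Beta functions after the substitution $x=a+t(b-a)$, expand $A_k$ and $B_k$ by the General Leibniz Rule, and exchange the order of summation to collect the coefficients of $f^{(j)}(a)$ and $f^{(j)}(b)$. Your sign bookkeeping is right: $(-1)^{2n}$ cancels on the $a$-side and $(-1)^{k-j}(-1)^k=(-1)^j$ on the $b$-side. Your checks $w_0^a=(b-a)/2$ and $w_1^a=(b-a)^2/12$ for $n=2$ also agree with the paper's $n=2$ formula.
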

\begin{proof}
We begin with the left hand side of \cref{her_rewrite} using \Cref{defi:her_def}
\begin{equation}
\begin{aligned}
    \int_a^b H_n(f;\ x)\, dx
    &= \underbrace{\int_a^b (x-a)^{n}\sum_{k=0}^{n-1}\frac{B_k(x-b)^k}{k!}\, dx}_{(I)} \\
    &\quad+ \underbrace{\int_a^b (x-b)^{n}\sum_{k=0}^{n-1}\frac{A_k(x-a)^k}{k!}\, dx}_{(II)}.
\end{aligned}
\label{I_II}
\end{equation}
Let us consider the term (I) in \cref{I_II}. We have:
\begin{align}
\label{1}
        & \int_a^b(x-a)^{n}\sum_{k=0}^{n-1}\frac{B_k(x-b)^k}{k!}\, dx = \sum_{k=0}^{n-1}\frac{B_k}{k!}\int_a^b(x-a)^{n}(x-b)^k\, dx 
    \\
    \label{2}
    &= \sum_{k=0}^{n-1}\frac{B_k}{k!}\int_0^1t^{n}(b-a)^{n}(b-a)^k(t-1)^k(b-a)\, dt, \\
    \label{3}
    &= \sum_{k=0}^{n-1}\frac{B_k(b-a)^{n+k+1}}{k!}\int_0^1t^{n}(t-1)^k\,dt
    \\
    \label{4}
    &= \sum_{k=0}^{n-1}\frac{B_k(b-a)^{n+k+1}}{k!}(-1)^kB(n+1,k+1)
    \\
      \label{5}
    &=\sum_{k=0}^{n-1}\frac{B_k(b-a)^{n+k+1}}{k!}(-1)^k\frac{n!k!}{(n+k+1)!},
\end{align}
where we used the transformation $x=t(b-a)+a$ in going from \cref{1} to \cref{2}, the definition of the Beta function \cite{abromowitz1972handbook}, $B(n+1,\ k+1)$, in \cref{4}, and its value in \cref{5}. 
A similar computation for the term (II) in \cref{I_II} yields: 
\begin{equation}
        \int_a^b(x-b)^{n}\sum_{k=0}^{n-1}\frac{A_k(x-a)^k}{k!}\, dx = \sum_{k=0}^{n-1}\frac{A_k(b-a)^{n+k+1}}{k!}(-1)^{n}\frac{n!k!}{(n+k+1)!}.
        \label{II_comp}
\end{equation}
We now simplify the terms $A_k$ and $B_k$ from \cref{her_def}.  
Using the General Leibniz Rule \cite{apostol1967one}, we have:
\begin{equation}
\begin{aligned}
 B_k &= \frac{d^k}{dx^k}\LRs{\frac{f(x)}{(x-a)^n}}_{x=b}
    \\
    &= \LRs{\sum_{j=0}^k\binom{k}{j}f^{(j)}(x) \frac{d^{k-j}}{dx^{k-j}}\LRs{\frac{1}{(x-a)^n}}}_{x=b}
    \\
    &= \sum_{j=0}^kf^{(j)}(b)\LRs{\binom{k}{j}\frac{(-1)^{k-j}(n+k-j-1)!}{(n-1)!(b-a)^{n+k-j}}}.    
\end{aligned}
\label{B_k}
\end{equation}
Following a similar procedure for $A_k$ in \cref{her_def} yields:
\begin{equation}
 A_k = \sum_{j=0}^kf^{(j)}(a)\LRs{\binom{k}{j}\frac{(-1)^{k-j}(n+k-j-1)!}{(n-1)!(a-b)^{n+k-j}}}.
    \label{A_k}
\end{equation}
Substituting \cref{B_k} and \cref{A_k} back into \cref{5} and \cref{II_comp}, respectively, and recomputing \cref{I_II} leads to: 
\begin{equation}
\begin{aligned}
    \int_a^bH_n(f;\ x)\, dx &= \sum_{j=0}^{n-1}f^{(j)}(b)\LRs{(-1)^j(b-a)^{j+1}n\sum_{k=j}^{n-1}\binom{k}{j}\frac{(n+k-j-1)!}{(n+k+1)!}}
    \\
    &\quad+ \sum_{j=0}^{n-1}f^{(j)}(a)\LRs{(b-a)^{j+1}n\sum_{k=j}^{n-1}\binom{k}{j}\frac{(n+k-j-1)!}{(n+k+1)!}}.
\end{aligned}       
\label{her_pro}
\end{equation}
This concludes the proof.
\end{proof}
\section{An elementary approach to derive Hermite quadrature error  using milder conditions}

We start by recalling the reverse integration by parts formula:
\begin{equation*}
    \int_a^b f(x)\, dx = f(b)(b+c)+ f(a)(-a-c) - \int_a^b f^{(1)}(x)(x+c)\, dx,
\end{equation*}
where $c \in \mathbb{R}$ is arbitrary. 
Performing reverse integration by parts process once more gives
\begin{multline}
    \int_a^b f(x)\, dx = f(b)(b+c) + f(a)(-a-c)+f^{(1)}(b)\LRp{-\frac{1}{2}(b+c)^2 - \delta_0} \\
    \quad+f^{(1)}(a)\LRp{\frac{1}{2}(a+c)^2 + \delta_0} +\int_a^b f^{(2)}(x)\LRp{\frac{1}{2}(x+c)^2+\delta_0}\, dx,
\label{match}
\end{multline}
 where  $\delta_0 \in \mathbb{R}$ is arbitrary.  
More generally, if we repeat the procedure $n$ times, we  arrive at the following $n$th-time reverse integration by parts formula:
\begin{proposition}[$n$th-time reverse integration by parts]
   \\ Assume $f(x)\in C^{(n)}[a,\ b]$,\footnote{$C^{(n)}[a,\ b]$ denotes the set of all functions that are $n$ times continuously differentiable on the closed interval $[a,\ b]$.} where $n \in \mathbb{N}$. The following $\LRp{n-1}$th-time reverse integration by parts hold:
\begin{equation}
    \boxed{
\begin{aligned}
\int_a^b f(x)\, dx 
&= \sum_{j=0}^{n-1} 
f^{(j)}(b)(-1)^j 
\left[
    \frac{(b+c)^{j+1}}{(j+1)!}
    + \sum_{i=0}^{j-1} \delta_{\,i+n-1-j}\frac{b^{i}}{i!}
\right] \\
&\quad + \sum_{j=0}^{n-1} 
f^{(j)}(a)(-1)^{j+1}
\left[
    \frac{(a+c)^{j+1}}{(j+1)!}
    + \sum_{i=0}^{j-1} \delta_{\,i+n-1-j}\frac{a^{i}}{i!}
\right] \\
&\quad + \int_a^b (-1)^n f^{(n)}(x)
\left(
    \frac{(x+c)^n}{n!}
    + \sum_{i=0}^{n-2} \delta_i \frac{x^i}{i!}
\right)\,dx,
\end{aligned}
}
\label{eq:iterate}
\end{equation}
where the set of free parameters  $\boldsymbol{\theta} := \LRc{c,\ \delta_0,\dots, \delta_{n-2}}\in \mathbb{R}^n$ are arbitrary.
\label{propo:RIBP}
\end{proposition}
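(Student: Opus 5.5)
The plan is to avoid a formal induction on $n$ and instead fix $n$ and the parameters $\boldsymbol{\theta}$ from the start, then telescope $n$ integrations by parts against a single family of polynomials. Define the kernel
\[
K_n(x) := \frac{(x+c)^n}{n!} + \sum_{i=0}^{n-2} \delta_i \frac{x^i}{i!},
\]
which is the polynomial in the remainder integral of \cref{eq:iterate}. For $j=0,1,\dots,n$, set $P_j := K_n^{(n-j)}$. Then $P_n = K_n$ and $P_{j+1}^{(1)} = P_j$. Moreover $P_0 \equiv 1$, because the $n$th derivative of $(x+c)^n/n!$ is $1$ and every $\delta$-term has degree at most $n-2$, so it is annihilated.

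Next I compute the $P_j$ explicitly. Differentiating termwise $n-1-j$ times gives
\[
P_{j+1}(x) = \frac{(x+c)^{j+1}}{(j+1)!} + \sum_{i=n-1-j}^{n-2} \delta_i \frac{x^{\,i-(n-1-j)}}{(i-(n-1-j))!}.
\]
The terms with $i < n-1-j$ disappear. Reindexing by $i' = i-(n-1-j)$, which runs over $0,\dots,j-1$, turns this into
\[
P_{j+1}(x) = \frac{(x+c)^{j+1}}{(j+1)!} + \sum_{i'=0}^{j-1} \delta_{\,i'+n-1-j}\frac{x^{i'}}{i'!}.
\]
This is exactly the bracketed expression in \cref{eq:iterate}, evaluated at $x=b$ and at $x=a$.

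The core step is one integration by parts per level. For $0\le j\le n-1$, since $f\in C^{(n)}[a,b]$ and $P_{j+1}$ is a polynomial with $P_{j+1}^{(1)}=P_j$,
\[
\int_a^b (-1)^j f^{(j)}(x) P_j(x)\,dx = (-1)^j\Big[f^{(j)}(b)P_{j+1}(b) - f^{(j)}(a)P_{j+1}(a)\Big] + \int_a^b (-1)^{j+1} f^{(j+1)}(x)P_{j+1}(x)\,dx .
\]
Start from $\int_a^b f\,dx = \int_a^b (-1)^0 f^{(0)} P_0\,dx$ and apply this identity successively for $j=0,\dots,n-1$. The integrals telescope, leaving the boundary sums
\[
\sum_{j=0}^{n-1} (-1)^j f^{(j)}(b)P_{j+1}(b) \quad\text{and}\quad \sum_{j=0}^{n-1} (-1)^{j+1} f^{(j)}(a)P_{j+1}(a),
\]
together with the remainder $\int_a^b (-1)^n f^{(n)} P_n\,dx$. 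Substituting the explicit form of $P_{j+1}$ and $P_n=K_n$ gives \cref{eq:iterate}. As a sanity check, the case $n=2$ should reproduce \cref{match}.

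I expect the main obstacle to be the index bookkeeping, namely confirming that the $\delta$ subscripts come out as $\delta_{\,i+n-1-j}$ with $i$ ranging over $0,\dots,j-1$. A related point is that the parameters must be fixed once in $K_n$, rather than introduced afresh at each integration by parts. Fixing them once is what makes the set $\{c,\delta_0,\dots,\delta_{n-2}\}$ arbitrary and of cardinality $n$. It is also consistent with the paper's informal description of adding a new constant at each step, since each integration by parts permits adding a constant to the antiderivative, and those constants are exactly the $\delta_i$ (together with $c$).
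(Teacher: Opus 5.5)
Your proof is correct and takes essentially the same approach as the paper. The paper also uses the kernel $K_n$ with $K_n^{(n)}=1$, applies the $n$-fold integration-by-parts identity to $\int_a^b f^{(n)}K_n\,dx$, and gets the boundary coefficients from $K_n^{(n-1-j)}$ by the same termwise differentiation and reindexing. Your level-by-level telescoping with $P_j=K_n^{(n-j)}$ just spells out that $n$-fold identity, going upward from $\int_a^b f\,dx$ instead of downward from $\int_a^b f^{(n)}K_n\,dx$.
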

\begin{proof}
    Let us define the kernel $K_n$ as
\begin{equation}
K_n(x,\boldsymbol{\theta}) := \frac{(x+c)^n}{n!} + \sum_{i=0}^{n-2} \delta_i \frac{x^i}{i!},
\label{eq:Kn}
\end{equation}
and it follows that $K_n^{(n)}\LRp{x,\boldsymbol{\theta}} = 1$. The $n$-fold integration by parts identity gives
\begin{multline*}
    \int_a^b f^{(n)}(x) K_n(x,\boldsymbol{\theta})\, dx
= \sum_{k=0}^{n-1} (-1)^k 
\left[ f^{(n-1-k)}(x) K_n^{(k)}(x,\boldsymbol{\theta}) \right]_{x=a}^{x=b}
 \\+(-1)^n \int_a^b f(x) K_n^{(n)}(x,\boldsymbol{\theta})\,dx.
\end{multline*}
Since $K_n^{(n)}(x,\boldsymbol{\theta})=1$, multiplying by $(-1)^n$ and renaming the indices for the derivatives we obtain
\[
\begin{aligned}
\int_a^b f(x)\,dx
&= \sum_{j=0}^{n-1} f^{(j)}(b)(-1)^j K_n^{(n-1-j)}(b,\boldsymbol{\theta})
   \;+\; \sum_{j=0}^{n-1} f^{(j)}(a)(-1)^{j+1} K_n^{(n-1-j)}(a,\boldsymbol{\theta}) \\
&\quad\;\; + \int_a^b (-1)^n f^{(n)}(x) K_n(x,\boldsymbol{\theta})\, dx.
\end{aligned}
\]
Straightforward differentiation gives
\[
K_n^{(n-1-j)}(x,\boldsymbol{\theta})
= \frac{(x+c)^{j+1}}{(j+1)!}
  + \sum_{i=0}^{j-1} \delta_{i+n-1-j}\, \frac{x^i}{i!},
\qquad 0 \le j \le n-1,
\]
and this concludes the proof.
\end{proof}

\begin{remark}
    Note we assume $f \in C^{(n)}\LRs{a,b}$ in \cref{propo:RIBP} for simplicity. The result still holds under milder conditions. For example, $f \in C^{(n-1)}\LRp{a,b}$ so as $f^{(2n)} \in L^2\LRp{a,b}$ is a milder sufficient condition.
\end{remark}

\begin{idea}
If we can identity the set of  constants $\boldsymbol{\theta} := \LRc{c,\ \delta_0,\dots, \delta_{n-2}}$ in \cref{eq:iterate} such that the corresponding coefficients of $f^{(j)}(a)$ and $f^{(j)}(b)$ in \cref{eq:iterate} and \cref{her_pro} match, then an explicit formula for error $E_n$ in the Hermite quadrature \cref{eq:hermiteQuadrature} has been found using milder condition.
\end{idea}
Before we carry out the proposed idea for a general $n$, which is more complex,  let us first provide the details for $n=2$ in which we can explicitly derive  all quantities of interest,  including the exact expression for $E_2$ and its bounds.

\section{$n=2$: Exact expression for $E_2$ and its bounds}
In this section, thanks to our exact expression for $E_2$, for the first time in the literature we can derive the explicit expression for $E_2$ in terms of $f^{(2)}\LRp{x}$ by unique judicious choices for $c$ and $\delta_0$. We then provide new bounds for $E_2$ in terms of $f^{(2)}$, $f^{(3)}$, and $f^{(4)}$. Furthermore, we recover the exact classical expression for $E_2$ in terms of $f^{(4)}$.

\subsection{Exact expression and characterizations of $E_2$}

\begin{theorem}[Exact expression for $E_2$]
\label{thm:the_one}
   Consider the case $n=2$ in \Cref{her_def}, where we assume that $f(a),\ f(b),\ f^{(1)}(a)$, and $f^{(1)}(b)$ are available. 
   Assume that $f(x)\in C^{(2)}[a,\ b]$. 
   Then
   \[
   E_2 = \int_a^b f^{(2)}(x)K_2(x)\, dx = \frac{1}{2}\int_a^b f^{(2)}(x)(x-a)(x-b)\, dx + \frac{(b-a)^2}{12}[f^{(1)}(b) - f^{(1)}(a)],
   \]
   where the kernel is:
\begin{equation}
K_2(x) = \frac{(x-a)(x-b)}{2} + \frac{(b-a)^2}{12}
\label{eq:kernel}
\end{equation}
\end{theorem}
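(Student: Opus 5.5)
The plan is to specialize \cref{propo:RIBP} to $n=2$, i.e.\ to use formula \cref{match}, and to choose the free parameters $\boldsymbol{\theta}=\{c,\delta_0\}$ so that the boundary terms reproduce $\int_a^b H_2(f;\,x)\,dx$ as given by \cref{first_p}. For $n=2$, \cref{first_p} gives
\[
w_0^a=(b-a)\,2\Bigl[\tfrac{1}{6}+\tfrac{1}{12}\Bigr]=\tfrac{b-a}{2},\qquad
w_1^a=(b-a)^2\,2\cdot\tfrac{1}{24}=\tfrac{(b-a)^2}{12},
\]
so that
\[
\int_a^b H_2\,dx=\tfrac{b-a}{2}\,[f(a)+f(b)]+\tfrac{(b-a)^2}{12}\,[f^{(1)}(a)-f^{(1)}(b)].
\]

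Next, match coefficients in \cref{match}. The coefficients of $f(b)$ and $f(a)$ are $b+c$ and $-(a+c)$. Setting both equal to $\tfrac{b-a}{2}$ forces $c=-\tfrac{a+b}{2}$, which is consistent for both endpoints. With this $c$, we have $(b+c)^2=(a+c)^2=\tfrac{(b-a)^2}{4}$. The coefficient of $f^{(1)}(b)$ is then $-\tfrac{(b-a)^2}{8}-\delta_0$, and requiring it to equal $-\tfrac{(b-a)^2}{12}$ gives $\delta_0=-\tfrac{(b-a)^2}{24}$. The coefficient of $f^{(1)}(a)$ is $\tfrac{(b-a)^2}{8}+\delta_0=\tfrac{(b-a)^2}{12}$, which matches automatically. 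This step also shows that the choice of $(c,\delta_0)$ is unique.

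With these parameters the remainder in \cref{match} is $E_2=\int_a^b f^{(2)}(x)K_2(x)\,dx$, where
\[
K_2(x)=\tfrac12\Bigl(x-\tfrac{a+b}{2}\Bigr)^2-\tfrac{(b-a)^2}{24}.
\]
Expanding the square via $\bigl(x-\tfrac{a+b}{2}\bigr)^2=(x-a)(x-b)+\tfrac{(b-a)^2}{4}$ gives
\[
K_2(x)=\tfrac{(x-a)(x-b)}{2}+\tfrac{(b-a)^2}{8}-\tfrac{(b-a)^2}{24}=\tfrac{(x-a)(x-b)}{2}+\tfrac{(b-a)^2}{12},
\]
which is \cref{eq:kernel}.

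Finally, split the integral. Since $f\in C^{(2)}[a,b]$,
\[
\int_a^b f^{(2)}(x)\,\tfrac{(b-a)^2}{12}\,dx=\tfrac{(b-a)^2}{12}\,[f^{(1)}(b)-f^{(1)}(a)]
\]
by the fundamental theorem of calculus, and this yields the second expression for $E_2$. There is no real obstacle here. The only delicate points are sign bookkeeping in \cref{match}, in particular the $(-1)^{j+1}$ on the $a$-terms, and checking that the two $f^{(1)}$ conditions are compatible; the compatibility holds because $c$ is the midpoint, which makes $(b+c)^2=(a+c)^2$.
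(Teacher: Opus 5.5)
Your proposal is correct and follows the paper's proof: you specialize the reverse integration by parts identity \cref{match} to $n=2$, match its boundary coefficients against $\int_a^b H_2\,dx$ from \cref{first_p} to obtain $c=-\frac{a+b}{2}$ and $\delta_0=-\frac{(b-a)^2}{24}$, and read off the remainder. Beyond the paper, you also carry out the algebra that turns $\frac12\bigl(x-\frac{a+b}{2}\bigr)^2-\frac{(b-a)^2}{24}$ into the stated form of $K_2$, and the fundamental-theorem-of-calculus split that gives the second expression; the paper leaves both steps implicit.
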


\begin{proof}
For $n=2$, \cref{her_pro} can be simplified as:
\begin{equation}
\begin{aligned}
    \int_a^b H_2(x)\, dx
    &= f(b)\LRp{\frac{b-a}{2}} + f(a)\LRp{\frac{b-a}{2}} \\
    &\quad+ f^{(1)}(b)\LRp{-\frac{(b-a)^2}{12}} + f^{(1)}(a)\LRp{\frac{(b-a)^2}{12}}.
\end{aligned}
\label{int_h_2}
\end{equation}
Matching the coefficients between the reverse integration by parts formula \cref{match} and the integral of the Hermite interpolation \cref{int_h_2} yields the following system of equations involving the unknowns $c$ and $\delta_0$:
\[b+c = \frac{b-a}{2},\quad  -a-c = \frac{b-a}{2},\]
\[-\frac{1}{2}(b+c)^2-\delta_0 = -\frac{(b-a)^2}{12},\quad   \frac{1}{2}(a+c)^2+\delta_0 = \frac{(b-a)^2}{12}.\]
It is easy to see that $c = -\frac{a+b}{2}$ and $\delta_0 = - \frac{(b-a)^2}{24}$ satisfy this system of equations. 
Therefore, from \cref{match} the error in the Hermite interpolation-based quadrature is
\[ \int_a^b f(x)\,dx - \int_a^b H_2(x)\,dx
=\int_a^b f^{(2)}(x)\LRp{\frac{1}{2}\LRp{x-\frac{a+b}{2}}^2- \frac{(b-a)^2}{24}}\,dx,
\]
and this ends the proof.
\end{proof}

Compared to the classical error representation \cref{int_h}, the result in \cref{thm:the_one} requires only $f^{(2)}$ instead of $f^{(4)}$, and thus is valid for a broader class of functions.

\begin{lemma}[Kernel Properties]
\label{lem:propertiesK}
Define 
\[
G(x) := \int_a^x K_2(t)\, dt,
\]
then $K_2(x)$ and $G\LRp{x}$ satisfy:
\begin{enumerate}
\item $\int_a^b K_2(x)\, dx = 0$, $\int_a^b G\LRp{x}\,dx = 0$, 
\item $\int_a^b |K(x)|\, dx = \frac{(b-a)^3\sqrt{3}}{54}$, and $\nor{K_2}_2 = \sqrt{\int_a^b |K_2(x)|^2\, dx} = \frac{(b-a)^{5/2}}{\sqrt{720}}$
\item $\int_a^b |G(x)|\, dx = \frac{(b-a)^4}{192}$, and $\nor{G}_2 = \sqrt{\int_a^b |G(x)|^2\, dx} = \frac{(b-a)^{7/2}}{\sqrt{30240}}$.
\end{enumerate}
\end{lemma}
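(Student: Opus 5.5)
Since every quantity in \cref{lem:propertiesK} is an explicit polynomial integral, the whole proof reduces to direct computation once we normalize the interval. We substitute $x = \frac{a+b}{2} + \frac{b-a}{2}s$ with $s\in[-1,1]$, and write $h := \frac{b-a}{2}$. Using the form of $K_2$ obtained in the proof of \cref{thm:the_one},
\[
K_2(x) = \tfrac12\LRp{x-\tfrac{a+b}{2}}^2 - \tfrac{(b-a)^2}{24} = \tfrac{h^2}{6}\LRp{3s^2-1},
\]
so $K_2$ is, up to a factor, the Legendre polynomial $P_2(s)=\frac12(3s^2-1)$. For integrals, $dx = h\,ds$, so every $L^1$ or $L^2$ quantity becomes a power of $h$ times an elementary integral over $[-1,1]$.

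For item 1, $\int_{-1}^1(3s^2-1)\,ds = 0$ gives $\int_a^b K_2 = 0$. Next we compute $G$ explicitly: $G(x) = h\int_{-1}^s \frac{h^2}{6}(3u^2-1)\,du = \frac{h^3}{6}(s^3-s)$. The boundary contribution $\frac{h^3}{6}\big((-1)^3-(-1)\big)$ vanishes, which is consistent with $G(b)=0$. Because $s^3-s$ is odd, $\int_a^b G = 0$.

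For item 2, the kernel changes sign at $s=\pm 1/\sqrt3$. By evenness,
\[
\int_{-1}^1|3s^2-1|\,ds = 2\Big[\int_0^{1/\sqrt3}(1-3s^2)\,ds + \int_{1/\sqrt3}^1(3s^2-1)\,ds\Big].
\]
The first bracketed integral is $\frac{2}{3\sqrt3}$ and the second is $\frac{2}{3\sqrt3}$, so the total is $\frac{8}{3\sqrt3}$. Multiplying by $\frac{h^3}{6}$ and setting $h=(b-a)/2$ gives $\frac{(b-a)^3}{48}\cdot\frac{8}{3\sqrt3} = \frac{(b-a)^3\sqrt3}{54}$. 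The statement writes $K$, and we read this as $K_2$. For the $L^2$ norm, $\int_{-1}^1(3s^2-1)^2\,ds = \frac{8}{5}$, so $\nor{K_2}_2^2 = \frac{h^5}{36}\cdot\frac85 = \frac{(b-a)^5}{720}$, which is the stated value.

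For item 3, $|s^3-s| = |s|(1-s^2)$, so $\int_{-1}^1 |s^3-s|\,ds = 2\cdot\frac14 = \frac12$. This gives $\frac{h^4}{6}\cdot\frac12 = \frac{(b-a)^4}{192}$. Finally, $\int_{-1}^1 (s^3-s)^2\,ds = 2\LRp{\frac17-\frac25+\frac13} = \frac{16}{105}$, so $\nor{G}_2^2 = \frac{h^7}{36}\cdot\frac{16}{105} = \frac{(b-a)^7}{128\cdot 36}\cdot\frac{16}{105} = \frac{(b-a)^7}{30240}$.

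The only step that needs care is the absolute-value integral of $K_2$, because it requires locating the sign change at $s=\pm1/\sqrt3$ and handling the resulting $\sqrt3$ factors. Every other step is a routine polynomial integration made simple by parity in the variable $s$.
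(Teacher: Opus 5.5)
Your proposal is correct and takes the same approach as the paper, which only says the identities follow "by direct algebraic calculations." Your substitution $x=\frac{a+b}{2}+\frac{b-a}{2}s$, combined with parity in $s$, carries out exactly that computation, and every constant you obtain checks out: $\frac{8}{3\sqrt{3}}$, $\frac{8}{5}$, $\frac{1}{2}$ and $\frac{16}{105}$.
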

\begin{proof}
    The proof is straight forward by direct algebraic calculations.
\end{proof}

\subsection{General bounds for $E_2$ with uniform and $L_2$ norms}

\begin{proposition}
\label{propo:L2Inf_norm}
Define the midrange of the second derivative as 
\[
\beta_2:= \frac{\inf_{\LRs{a,b}} f^{(2)}(x) 
 +\sup_{\LRs{a,b}} f^{(2)}(x) }{2}
 \]
and the mean  as $\overline{f^{(2)}} = \frac{1}{b-a}\int_a^b f^{(2)}(x)\, dx$. Furthermore, define the deviations as
\[
\delta(x) := {f^{(2)}(x) - \beta_2}, \quad \text{and} \quad
\Delta(x) := {f^{(2)}(x) - \overline{f^{(2)}}}.
\]
Then
\begin{equation*}
|E_2| \leq \nor{\delta}_\infty \cdot \frac{(b-a)^3\sqrt{3}}{54}, \text{ and }
|E_2| \leq \nor{\Delta}_2 \frac{(b-a)^{5/2}}{\sqrt{720}}.
\end{equation*}
\end{proposition}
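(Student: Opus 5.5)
The plan is to start from the exact representation of \cref{thm:the_one}, $E_2=\int_a^b f^{(2)}(x)K_2(x)\,dx$, and use the zero-mean property of the kernel from \cref{lem:propertiesK}, item 1: $\int_a^b K_2(x)\,dx=0$. Because of this, any constant can be subtracted from $f^{(2)}$ without changing $E_2$. In particular, for any constant $\gamma$,
\[
E_2=\int_a^b \LRp{f^{(2)}(x)-\gamma}K_2(x)\,dx .
\]
The two bounds then come from two choices of $\gamma$ combined with two different inequalities.

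For the uniform bound, I take $\gamma=\beta_2$, so the integrand becomes $\delta(x)K_2(x)$. Pulling out the supremum gives
\[
|E_2|\le \nor{\delta}_\infty\int_a^b|K_2(x)|\,dx .
\]
Substituting $\int_a^b|K_2|\,dx=\frac{(b-a)^3\sqrt3}{54}$ from \cref{lem:propertiesK}, item 2, yields the first estimate. As a side remark, $\nor{\delta}_\infty=\frac{\sup f^{(2)}-\inf f^{(2)}}{2}$, which explains why the midrange is the optimal constant here. Since $f\in C^{(2)}[a,b]$, the infimum and supremum are finite and attained, so $\beta_2$ is well defined.

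For the $L_2$ bound, I take $\gamma=\overline{f^{(2)}}$, so the integrand becomes $\Delta(x)K_2(x)$. Applying Cauchy--Schwarz gives
\[
|E_2|\le \nor{\Delta}_2\nor{K_2}_2 .
\]
Inserting $\nor{K_2}_2=\frac{(b-a)^{5/2}}{\sqrt{720}}$ from \cref{lem:propertiesK}, item 2, completes the second estimate.

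The only step that is not routine is the values in \cref{lem:propertiesK}, which the paper states without proof. If I need to verify them, I would shift to the midpoint and write $s=x-\frac{a+b}{2}$, $h=\frac{b-a}{2}$. Then
\[
K_2=\frac{s^2}{2}-\frac{h^2}{6},
\]
whose roots are at $s=\pm h/\sqrt3$. Integrating $|K_2|$ piecewise between these roots gives $\frac{4\sqrt3}{27}h^3=\frac{\sqrt3(b-a)^3}{54}$. Likewise, $\int K_2^2\,ds=\frac{4h^5}{45}=\frac{(b-a)^5}{360}$. This must be checked against the lemma, which states $\nor{K_2}_2=\frac{(b-a)^{5/2}}{\sqrt{720}}$, i.e.\ $\int K_2^2\,dx=\frac{(b-a)^5}{720}$. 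My calculation instead gives $\frac{(b-a)^5}{360}$, so the constant in the second bound may need to be $1/\sqrt{720}$ replaced by $1/\sqrt{360}$ (or $1/(6\sqrt{10})$).

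I expect this discrepancy in the $L_2$ constant to be the main obstacle. I would resolve it by recomputing $\int_{-h}^{h}\LRp{\frac{s^2}{2}-\frac{h^2}{6}}^2ds$ directly. The structure of the argument itself is unaffected, since it relies only on the zero mean of $K_2$ and Hölder's inequality.
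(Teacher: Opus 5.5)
Your main argument is the paper's own proof. You use $\int_a^b K_2\,dx=0$ to replace $f^{(2)}$ by $\delta$ or $\Delta$, then apply the $L^\infty$--$L^1$ estimate and Cauchy--Schwarz with the constants of \cref{lem:propertiesK}. Your $L^1$ check, $\frac{4\sqrt3}{27}h^3=\frac{\sqrt3(b-a)^3}{54}$, is also correct.

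The flaw is the claimed discrepancy in the $L_2$ constant. It comes from an arithmetic slip on your side, not from an error in the lemma. Expanding the square gives
\[
\int_{-h}^{h}\LRp{\frac{s^2}{2}-\frac{h^2}{6}}^2 ds=\frac{h^5}{10}-\frac{h^5}{9}+\frac{h^5}{18}=\frac{9-10+5}{90}\,h^5=\frac{2h^5}{45},
\]
not $\frac{4h^5}{45}$. With $h^5=\frac{(b-a)^5}{32}$ this is exactly $\frac{(b-a)^5}{720}$. A direct check with $a=0$, $b=1$ agrees: $\int_0^1 K_2^2\,dx=\frac{1}{120}-\frac{1}{72}+\frac{1}{144}=\frac{1}{720}$. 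So $\nor{K_2}_2=\frac{(b-a)^{5/2}}{\sqrt{720}}$ as stated, and the obstacle you anticipate does not exist. You should drop the suggested fix: replacing $1/\sqrt{720}$ by $1/\sqrt{360}$ would prove a strictly weaker inequality than the one the proposition asserts.
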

\begin{proof}
    Since $\int_a^b K_2(x)\, dx = 0$:
\begin{align*}
\snor{E_2} &= \snor{\int_a^b f^{(2)}(x)K_2(x)\, dx} = \snor{\int_a^b \delta(x)K_2(x)\, dx} \le \nor{\delta}_\infty\int_a^b \snor{K_2(x)}\, dx \\
& \le \nor{\delta}_\infty \cdot \frac{(b-a)^3\sqrt{3}}{54}.
\end{align*}
On the other hand, by Cauchy-Schwarz inequality we have
\[
\snor{E_2} = \snor{\int_a^b \Delta(x)K_2(x)\, dx} \le
\nor{\Delta}_2\nor{K_2}_2 = \nor{\Delta}_2 \frac{(b-a)^{5/2}}{\sqrt{720}}.
\]
\end{proof}

\begin{remark}
    If we do not exploit the fact that $\int_a^b K(x)\, dx = 0$, then the obvious bounds would be
    \[
    |E_2| \leq \nor{f^{(2)}}_\infty \cdot \frac{(b-a)^3\sqrt{3}}{54}, \text{ and }
|E_2| \leq \nor{f^{(2)}}_2 \frac{(b-a)^{5/2}}{\sqrt{720}},
    \]
    which would be more conservative owing the fundamental results for the deviation from the midrange in the infinity norm and the mean in the $L^2$-norm:
    \[
    \nor{\delta}_\infty \le \nor{f^{(2)}}_\infty, \text{ and } \nor{\Delta}_2 \le \nor{f^{(2)}}_2.
    \]
Note the constants $\beta_2$ and $\overline{f^{(2)}}$ are the best in the sense that they are chosen to solve the following optimization problems
    \[
\beta_2 = \arg\min_{\beta} \nor{f^{(2)} - \beta}_\infty, \quad \text{ and } \quad
\overline{f^{(2)}} = \arg\min_{\beta} \nor{f^{(2)} - \beta}_2.
    \]
\end{remark}

\subsection{Improved Bounds for $E_2$ when $f^{(3)}$ exists}
Using integration by parts, we have
\begin{align*}
E_2 &= \int_a^b f^{(2)}(x)K_2(x)\, dx \\
&= \LRs{f^{(2)}(x) \int_a^x K_2(t)\, dt}_a^b - \int_a^b f^{(3)}(x) G(x)\, dx = -\int_a^b f^{(3)}(x) G(x)\, dx,
\end{align*}
where we have used properties of $G(x)$ in \Cref{lem:propertiesK}.
\begin{proposition}
    \label{propo:L2Inf_normf3}
Define the midrange of the third derivative as \\
$\beta_3:= \frac{\inf_{\LRs{a,b}} f^{(3)}(x) 
 +\sup_{\LRs{a,b}} f^{(3)}(x) }{2}$
and
the mean as $\overline{f^{(3)}} = \frac{1}{b-a}\int_a^b f^{(3)}(x)\, dx$. In addition, define the deviations as
\[
\lambda(x) := {f^{(3)}(x) - \beta_3}, \quad
\text{and }
\Lambda(x) := {f^{(3)}(x) - \overline{f^{(3)}}}.
\]
Then
\begin{equation*}
|E_2| \leq \nor{\lambda}_\infty \cdot  \frac{(b-a)^4}{192}, \text{ and }
|E_2| \leq \nor{\Lambda}_2 \frac{(b-a)^{7/2}}{\sqrt{30240}}.
\end{equation*}
\end{proposition}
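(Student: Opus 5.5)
The plan is to mirror the proof of \Cref{propo:L2Inf_norm}, with $f^{(2)}$ replaced by $f^{(3)}$ and the kernel $K_2$ replaced by $G$. The starting point is the identity derived just before the statement, $E_2 = -\int_a^b f^{(3)}(x) G(x)\,dx$. It relies on $G(a)=0$ by definition and $G(b)=\int_a^b K_2 = 0$ by \Cref{lem:propertiesK}, so the boundary term of the integration by parts vanishes. Here we assume $f\in C^{(3)}[a,b]$, or at least that $f^{(3)}$ exists and is integrable, so that the integration by parts is legitimate.

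Next we exploit $\int_a^b G(x)\,dx = 0$, again from \Cref{lem:propertiesK}. This lets us subtract any constant from $f^{(3)}$ without changing the integral:
\[
E_2 = -\int_a^b \LRp{f^{(3)}(x)-\beta_3} G(x)\,dx = -\int_a^b \lambda(x) G(x)\,dx,
\]
and likewise $E_2 = -\int_a^b \Lambda(x) G(x)\,dx$. For the first bound we pull out the sup norm: $|E_2| \le \nor{\lambda}_\infty \int_a^b |G(x)|\,dx = \nor{\lambda}_\infty (b-a)^4/192$. For the second we apply Cauchy--Schwarz: $|E_2| \le \nor{\Lambda}_2 \nor{G}_2 = \nor{\Lambda}_2 (b-a)^{7/2}/\sqrt{30240}$. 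Both constants come from item 3 of \Cref{lem:propertiesK}.

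The only real content is in the constants, which the lemma asserts without proof; I would verify them to make the argument self-contained. Substitute $x=a+(b-a)s$ and compute
\[
G(x)=(b-a)^3\LRp{\tfrac{s^3}{6}-\tfrac{s^2}{4}+\tfrac{s}{12}}=\tfrac{(b-a)^3}{12}\, s(2s-1)(s-1).
\]
This cubic is antisymmetric about $s=1/2$, which is consistent with $\int G=0$. It is positive on $(0,1/2)$ and negative on $(1/2,1)$, so
\[
\int_a^b|G| = 2\cdot\tfrac{(b-a)^4}{12}\int_0^{1/2} s(1-2s)(1-s)\,ds = \tfrac{(b-a)^4}{6}\cdot\tfrac{1}{32} = \tfrac{(b-a)^4}{192}.
\]
The $L_2$ norm is a polynomial integral, $\int_0^1 s^2(2s-1)^2(s-1)^2\,ds = 1/210$, giving $\nor{G}_2^2 = (b-a)^7/(144\cdot 210) = (b-a)^7/30240$. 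This matches the lemma. I expect this sign analysis of $G$ and the bookkeeping of the constants to be the main, though modest, obstacle; the rest is routine.
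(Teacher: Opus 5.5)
Your proposal is correct and follows the route the paper indicates. Integrating by parts gives $E_2=-\int_a^b f^{(3)}G\,dx$, and $\int_a^b G\,dx=0$ lets you subtract $\beta_3$ or $\overline{f^{(3)}}$; the sup-norm bound and Cauchy--Schwarz then apply with the constants of \Cref{lem:propertiesK}. Your check of those constants via $G=\tfrac{(b-a)^3}{12}s(2s-1)(s-1)$, giving $\int_a^b|G|\,dx=(b-a)^4/192$ and $\nor{G}_2^2=(b-a)^7/30240$, is also correct and supplies the computation the paper omits.
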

\begin{proof}
    The proof is similar to that of \Cref{propo:L2Inf_norm} using properties of $G\LRp{x}$ in \Cref{lem:propertiesK}, and thus is omitted.
\end{proof}

\subsection{An enhance of the classical representation for $E_2$ when $f^{(4)}$ exists}

\begin{proposition}
    \label{propo:L2Inf_normf4}
    There holds:
    \[
    E_2 = \int_a^b f^{(4)}(x) H(x)\, dx,
\quad    
    \text{where} \quad
H(x) :=  \int_{a}^x G(t)\,dt =  \frac{(x-a)^2(x-b)^2}{24}.
\]
    If, additionally, $f^{(4)}(x)$ is continuous on $\LRp{a,b}$, then
    \[
    E_2 = f^{(4)}(\varepsilon)\int_a^b  H(x)\, dx  = f^{(4)}(\varepsilon) \frac{\LRp{b-a}^5}{720},
    \]
    for some $\varepsilon \in \LRs{a,b}$.

\end{proposition}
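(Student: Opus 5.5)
The plan is to integrate by parts once more, starting from the representation $E_2 = -\int_a^b f^{(3)}(x)G(x)\,dx$ established just before \Cref{propo:L2Inf_normf3}. Write $H(x) = \int_a^x G(t)\,dt$, so that $H' = G$. Integrating by parts gives
\[
-\int_a^b f^{(3)}(x)G(x)\,dx = -\LRs{f^{(3)}(x)H(x)}_a^b + \int_a^b f^{(4)}(x)H(x)\,dx .
\]
By construction $H(a)=0$. By \Cref{lem:propertiesK} we have $\int_a^b G(x)\,dx = 0$, hence $H(b)=0$. The boundary term therefore vanishes, and we obtain $E_2=\int_a^b f^{(4)}H\,dx$.

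Next we identify $H$ in closed form. From \cref{eq:kernel}, $K_2$ is a quadratic with leading coefficient $1/2$. Hence $H$ is a quartic with leading coefficient $1/24$, since $H'''=K_2$. We also know $H(a)=H'(a)=0$, because $G(a)=0$. To show $H'(b)=G(b)=0$, we use $\int_a^b K_2\,dx = 0$ from \Cref{lem:propertiesK}. Finally, $H(b)=0$ was shown above.

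So $H$ has double roots at both $a$ and $b$, and with leading coefficient $1/24$ it must equal $(x-a)^2(x-b)^2/24$. A direct check is cheap: $\frac{d^3}{dx^3}\big[(x-a)^2(x-b)^2/24\big]$ is a quadratic with leading term $x^2/2$. Its vertex lies at $(a+b)/2$ by symmetry, and we need its constant term to match $K_2$. This can be confirmed by evaluating $K_2$ at $(a+b)/2$, which gives $-(b-a)^2/8+(b-a)^2/12 = -(b-a)^2/24$, and comparing with the third derivative there. That comparison is a routine calculation.

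For the mean-value form, note that $H\ge 0$ on $[a,b]$ and does not change sign. Since $f^{(4)}$ is continuous, the weighted integral mean value theorem gives $\varepsilon\in[a,b]$ with $E_2 = f^{(4)}(\varepsilon)\int_a^b H\,dx$. If continuity is only assumed on the open interval, we would work with the bounds $\inf f^{(4)}\le \cdot\le \sup f^{(4)}$ and apply the intermediate value property, assuming these are finite. The integral evaluates via the Beta function as in the proof of \Cref{first_p}:
\[
\int_a^b (x-a)^2(x-b)^2\,dx = (b-a)^5 B(3,3) = \frac{(b-a)^5}{30},
\]
giving $(b-a)^5/720$.

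The only delicate point is justifying the extra integrations by parts. This requires $f^{(3)}$ to be absolutely continuous, or $f\in C^{(4)}$, which is implicit in the statement. Everything else reduces to the vanishing boundary terms already supplied by \Cref{lem:propertiesK}.
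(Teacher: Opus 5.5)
Your argument is the paper's: one more integration by parts from $E_2=-\int_a^b f^{(3)}G\,dx$, with the boundary term killed by $H(a)=0$ and $H(b)=\int_a^b G\,dx=0$, followed by the weighted integral mean value theorem using $H\ge 0$. Your double-root identification of $H$ and the Beta-function evaluation of $\int_a^b H\,dx$ spell out steps the paper leaves to the reader.

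One slip needs correcting. Since $H'=G$ and $G'=K_2$, you have $H''=K_2$, not $H'''=K_2$; read literally, the latter would make $H$ a quintic with leading coefficient $1/120$. For the same reason, your ``direct check'' should differentiate $(x-a)^2(x-b)^2/24$ twice, not three times, because its third derivative is the linear function $x-\frac{a+b}{2}$. With that fix, the second derivative is $\frac{1}{2}\left(x-\frac{a+b}{2}\right)^2-\frac{(b-a)^2}{24}=K_2(x)$, exactly as you intended.
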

\begin{proof}
Integrating by parts  gives
\[
E_2=-\int_a^b f^{(3)}(x) G(x)\, dx =
 -\LRs{f^{(3)}(x) H(t)\, dt}_a^b + 
\int_a^b f^{(4)}(x) H(x)\, dx.
\]
Since $H(x)$ does not change sign (nonnegative) on $\LRs{a,b}$, an application of the integral mean value theorem yields the second assertion.
\end{proof}

\begin{remark}
    The result in \cref{propo:L2Inf_norm} is slightly more favorable than the classical one in \cref{int_h} as it does not requires an additional constant $\epsilon\LRp{x}$.  
\end{remark}
\section{General $n$ case}

\subsection{Exact error representation via coefficient matching}

We have shown in the previous section how to determine free parameters $c,\delta_0$ for the special case $n = 2$. We can deploy a similar approach to find free parameters for $n\in \LRc{3,4,5}$, and the results are summarized in \Cref{tab:deltas}.
In the proof of \cref{thm:the_one}, we notice that there are 4 conditions, but we have only 2 parameters $c$ and $\delta_0$. It is, however, not an overdetermined system as two of them are redundant as we now show for general $n$. To begin, let us rewrite \cref{eq:iterate} in the following form
\begin{equation}
\label{eq:RIBP-general}
\begin{aligned}
\int_a^b f(x)\,dx
&= \sum_{j=0}^{n-1} \alpha_j^b f^{(j)}(b)
   + \sum_{j=0}^{n-1} \alpha_j^a f^{(j)}(a)
   + \int_a^b (-1)^n f^{(n)}(x)\,K_n(x,\boldsymbol{\theta})\,dx,
\end{aligned}
\end{equation}
where the kernel $K_n$ is defined in \cref{eq:Kn}, and 
\begin{align*}
\alpha_j^b
&:= (-1)^j\left[
    \frac{(b+c)^{j+1}}{(j+1)!}
    + \sum_{k=0}^{j-1} \delta_{\,n-1-j+k}\,\frac{b^k}{k!}
\right],\\
\alpha_j^a
&:= (-1)^{j+1}\left[
    \frac{(a+c)^{j+1}}{(j+1)!}
    + \sum_{k=0}^{j-1} \delta_{\,n-1-j+k}\,\frac{a^k}{k!}
\right].
\end{align*}

\begin{idea}
\Cref{eq:RIBP-general} is the error equation for the  Hermite quadrature in \cref{her_rewrite} when
\begin{multline}
\sum_{j=0}^{n-1} \alpha_j^b f^{(j)}(b)
   + \sum_{j=0}^{n-1} \alpha_j^a f^{(j)}(a) \\ = 
   \sum_{j=0}^{n-1}w_j^a f^{(j)}(a)+\sum_{j=0}^{n-1}w_j^b f^{(j)}(b), \quad \forall f \in C^{(n)}[a,\ b].
   \label{eq:HermiteCondition}
\end{multline}
\end{idea}

\begin{lemma}[Matching conditions]
    The condition in \cref{eq:HermiteCondition} holds iff
    \begin{equation}
            \alpha_j^a = w_j^a,  \text{ and }
    \alpha_j^b = w_j^b, \quad \forall j=0, \hdots, n-1.
\label{eq:HermiteMatching}
    \end{equation}
    \label{lem:HermiteMatching}
\end{lemma}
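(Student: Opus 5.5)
The sufficiency direction is immediate. If $\alpha_j^a = w_j^a$ and $\alpha_j^b = w_j^b$ for every $j=0,\dots,n-1$, then the two sides of \cref{eq:HermiteCondition} coincide term by term for every $f$. All of the work is therefore in the necessity direction. There I will show that the $2n$ linear functionals $f\mapsto f^{(j)}(a)$ and $f\mapsto f^{(j)}(b)$, $j=0,\dots,n-1$, are linearly independent on $C^{(n)}[a,b]$. Once that is known, \cref{eq:HermiteCondition} is the statement
\[
\sum_{j=0}^{n-1}(\alpha_j^a-w_j^a)f^{(j)}(a)+\sum_{j=0}^{n-1}(\alpha_j^b-w_j^b)f^{(j)}(b)=0 \quad \text{for all } f\in C^{(n)}[a,b],
\]
and independence forces every coefficient to vanish, which is exactly \cref{eq:HermiteMatching}.

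To get independence, I will test the identity on functions that isolate a single functional. For a fixed index $m\in\{0,\dots,n-1\}$ and endpoint $a$, I want a polynomial $p$ (hence in $C^{(n)}[a,b]$) satisfying
\[
p^{(j)}(a)=\delta_{jm}, \qquad p^{(j)}(b)=0, \qquad j=0,\dots,n-1.
\]
Such a $p$ exists because the two-point Hermite interpolation problem with $2n$ prescribed data is solvable. I can take $p=H_n(g;\,x)$ from \Cref{defi:her_def} for any smooth $g$ carrying these data, using the interpolation properties stated right after that definition. To be fully explicit, one can instead take $p(x)=\frac{(x-a)^m}{m!}(x-b)^n q(x)$, where $q$ is a polynomial chosen to fix the remaining derivatives at $a$. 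This works because $(x-b)^{-n}$ is analytic near $a$, so the needed Taylor coefficients of $q$ exist. Substituting $p$ into the displayed identity leaves only $\alpha_m^a-w_m^a=0$. The symmetric choice with the roles of $a$ and $b$ swapped gives $\alpha_m^b=w_m^b$.

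The only point that needs care is existence of these dual basis functions, and it follows from the well-posedness of two-point Hermite interpolation, which the paper takes from \cite{alma991022147289706011, atkinson}. Because $a\neq b$, the explicit product construction avoids citing that result altogether. Everything else is linear bookkeeping.
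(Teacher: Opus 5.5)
Your proof is correct and follows the paper's approach. Both arguments test the identity on functions whose endpoint derivative data are $\delta_{jm}$ at one endpoint and zero at the other, so that the difference functional isolates the single coefficient $\alpha_m^a-w_m^a$ (or $\alpha_m^b-w_m^b$). The only difference is the test function: the paper uses $\frac{(x-a)^{j_0}}{j_0!}\phi(x)$ with a smooth cutoff $\phi$, while you use the polynomial $\frac{(x-a)^m}{m!}(x-b)^n q(x)$, where $q$ is the degree-$(n-m-1)$ Taylor polynomial of $(x-b)^{-n}$ at $a$. Your choice avoids cutoff functions and lies in $\mc{P}^{2n-1}[a,b]$, so it shows the conclusion already follows from \cref{eq:HermiteCondition} restricted to polynomials.
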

\begin{proof}
    The sufficiency is clear, and we focus on the necessary. We start by defining the difference linear functional
\[
D(f):=\sum_{j=0}^{n-1}(\alpha_j^b-w_j^b) f^{(j)}(b)
   + \sum_{j=0}^{n-1}(\alpha_j^a-w_j^a) f^{(j)}(a).
\]
By \eqref{eq:HermiteCondition}, we have $D(f)=0$ for all $f\in C^{(n)}[a,b]$.
Let us define $d_j^a:=\alpha_j^a-w_j^a$ and $d_j^b:=\alpha_j^b-w_j^b$. There exists a smooth cut-off function $\phi$ such that 
$\phi =  1$ on a neighborhood of $a$,
and $\phi\equiv 0$ on a neighborhood of $b$ \cite{folland1999real, hirsch1976differential}.
Next, let us define
\begin{equation}
f_{j_0}(x):=\frac{(x-a)^{j_0}}{j_0!}\,\phi(x).
\label{eq:fj}
\end{equation}
Because $\phi= 1$ near $a$, all derivatives of $\phi$ vanish at $a$, so
\[
f_{j_0}^{(j)}(a)=\delta_{j,j_0}\qquad (j=0,\dots,n-1).
\]
where $\delta_{j,j_0}$ is the  Kronecker delta. Because $\phi= 0$ near $b$, $f_{j_0} =  0$ near $b$, hence
\[
f_{j_0}^{(j)}(b)=0\qquad (j=0,\dots,n-1).
\]
Substituting into $D(f)=0$ gives
\[
0=D(f_{j_0})
=\sum_{j=0}^{n-1} d_j^a f_{j_0}^{(j)}(a)
+\sum_{j=0}^{n-1} d_j^b f_{j_0}^{(j)}(b)
=d_{j_0}^a,
\]
so $d_{j_0}^a=0$. Since $j_0$ was arbitrary, $d_j^a=0$ for all $j$.
Similarly, we can show that  $d_j^b=0$ for all $j$.
Therefore $\alpha_j^a=w_j^a$ and $\alpha_j^b=w_j^b$ for all $j=0,\dots,n-1$.
\end{proof}

\subsection{Redundancy in matching conditions and properties of the error kernel}

In the matching conditions \cref{eq:HermiteMatching}, there are  $2n$ equations for $n$ unknowns  $\boldsymbol{\theta} = \LRc{c,\delta_{n-2},\hdots,\delta_0}$. We first consider $j=0$. In this case, \cref{eq:HermiteMatching} becomes
\[
\begin{aligned}
    b + c &= (b-a)n\sum_{k=0}^{n-1}\binom{k}{j}\frac{(n+k-j-1)!}{(n+k+1)!} = \frac{b-a}{2},\\
    -(a+c) &= (b-a)n\sum_{k=0}^{n-1}\binom{k}{j}\frac{(n+k-j-1)!}{(n+k+1)!} = \frac{b-a}{2},
\end{aligned}
\]
owing to the fact that 
\[
n \sum_{k=0}^{n-1}\frac{(n+k-1)!}{(n+k+1)!} = \frac{1}{2}.
\]
Though there are 2 equations for $c$ they are not independent. Solving either of them for $c$ gives
\[
c = -\frac{a+b}{2},
\]
independent of $n$. That is, if $\alpha_0^a = w_0^a$, then $\alpha_0^b = w_0^b$, and vice versa. We are going to show that this also holds for all $1\le j \le n-2$. To that end, assume $\alpha_0^a = w_0^a$, we need somehow to tie the RIBP \cref{eq:iterate} and the Hermite quadrature \cref{her_rewrite} in order to relate $\alpha_j^b  $ and $ w_j^b$. 

The classical result \cref{int_h} gives us the desired connection by taking $f \in \mc{P}^{2n-1}\LRs{a,b}$, where $\mc{P}^{2n-1}\LRs{a,b}$ is the set of polynomials of order at most $2n-1$, as the Hermite interpolation error $\frac{f^{(2n)}(\varepsilon)}{(2n)!}\LRp{(x-a)(x-b)}^n$ vanishes. In particular, we have
\begin{multline*}
\sum_{j=0}^{n-1}w_j^a f^{(j)}(a)+\sum_{j=0}^{n-1}w_j^b f^{(j)}(b) = \int_a^b H_n(f;\ x) dx = \int_a^b f(x) dx \\= \sum_{j=0}^{n-1}\alpha_j^a f^{(j)}(a)+\sum_{j=0}^{n-1}\alpha_j^b f^{(j)}(b) + \int_a^b (-1)^n f^{(n)}(x)
K_n\LRp{x,\boldsymbol{\theta}}\,dx,
\end{multline*}
for any $f \in \mc{P}^{2n-1}\LRs{a,b}$.
After using the assumption $\alpha_j^a = w_j^a$, we obtain
\begin{equation}
\sum_{j=0}^{n-1}\LRp{w_j^b - \alpha_j^b} f^{(j)}(b) = (-1)^n \int_a^b  f^{(n)}(x) 
K_n\LRp{x,\boldsymbol{\theta}}\,dx,
\label{eq:orthogonality}
\end{equation}
for any $f \in \mc{P}^{2n-1}\LRs{a,b}$. What remains is to choose $f \in \mc{P}^{2n-1}\LRs{a,b}$ judiciously so that the right hand side vanishes while isolating $\LRp{w_j^b - \alpha_j^b}$ out on the left hand side. To annihilate the right hand side, the simplest choice is $f \in \mc{P}^{n-1}\LRs{a,b}$, and to single out $\LRp{w_j^b - \alpha_j^b}$ we exploit what we already knew in \cref{eq:fj}. By taking $f = \frac{(x-a)^{j_0}}{j_0!}$ for $0 \le j_0 \le n-1$, we have
\[
f^{(j)}(a)=\delta_{j,j_0}, \quad \text{ and thus } \quad w_{j_0}^b - \alpha_{j_0}^b = 0, \quad \forall 0 \le j_0 \le n-1,
\]
which concludes the proof of the following redundancy theorem.
\begin{theorem}[Redundancy]
     $\alpha_j^a = w_j^a, 0\le j \le n-1$ if and only if $\alpha_j^b = w_j^b, 0\le j \le n-1$.
     \label{thm:redundancy}
\end{theorem}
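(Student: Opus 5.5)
The plan is to prove both directions by a single symmetric argument built on the exactness of Hermite quadrature on $\mc{P}^{2n-1}\LRs{a,b}$.

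For any $f \in \mc{P}^{2n-1}\LRs{a,b}$ we have $H_n(f;\ x) = f(x)$. This follows from \cref{her_error}, or more elementarily from uniqueness of the Hermite interpolant: the difference of two degree-$(2n-1)$ polynomials with $n$-fold zeros at $a$ and at $b$ is divisible by $(x-a)^n(x-b)^n$, so it vanishes. Hence $\int_a^b f\,dx = \sum_j w_j^a f^{(j)}(a) + \sum_j w_j^b f^{(j)}(b)$. Since \cref{eq:RIBP-general} holds for every $\boldsymbol{\theta}$ and every such $f$, subtracting gives, for all $f \in \mc{P}^{2n-1}\LRs{a,b}$,
\[
\sum_{j=0}^{n-1}(w_j^a-\alpha_j^a) f^{(j)}(a) + \sum_{j=0}^{n-1}(w_j^b-\alpha_j^b) f^{(j)}(b) = (-1)^n\int_a^b f^{(n)}(x) K_n(x,\boldsymbol{\theta})\,dx.
\]

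For the forward direction, assume $\alpha_j^a = w_j^a$ for all $j$. The identity then reduces to \cref{eq:orthogonality}. We restrict further to $f \in \mc{P}^{n-1}\LRs{a,b}$, so that $f^{(n)}\equiv 0$ and the right-hand side vanishes. The test functions must single out each coefficient at $b$, so we take $f(x) = (x-b)^{j_0}/j_0!$ for $0\le j_0\le n-1$. This gives $f^{(j)}(b) = \delta_{j,j_0}$ and hence $w_{j_0}^b = \alpha_{j_0}^b$. Note that the text's choice $(x-a)^{j_0}/j_0!$ isolates derivatives at $a$, not at $b$, so it must be replaced by $(x-b)^{j_0}/j_0!$ here.

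On closer inspection, the hypothesis is not even needed. For $f \in \mc{P}^{n-1}\LRs{a,b}$ the right-hand side vanishes regardless of $\boldsymbol{\theta}$, and the remaining identity mixes values at $a$ and at $b$. We can still decouple them with polynomial test functions: the polynomial $(x-b)^{j_0}/j_0!$ has derivatives at $a$ that are generally nonzero. So with only degree $\le n-1$ we get $n$ equations for $2n$ unknowns $d_j^a, d_j^b$, and we cannot separate them without the hypothesis. That is exactly where the assumption is used: once the $a$-terms are killed by the hypothesis, the $b$-terms are isolated by the test functions $(x-b)^{j_0}/j_0!$.

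The reverse direction is symmetric. Assume $\alpha_j^b = w_j^b$ for all $j$. The identity becomes $\sum_j (w_j^a-\alpha_j^a)f^{(j)}(a) = (-1)^n\int_a^b f^{(n)}K_n\,dx$. Testing with $f = (x-a)^{j_0}/j_0!\in\mc{P}^{n-1}$ annihilates the right-hand side and yields $w_{j_0}^a = \alpha_{j_0}^a$ for every $j_0$.

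The main point to verify carefully is that \cref{eq:RIBP-general} and the exactness of Hermite quadrature both hold identically for polynomial test functions. This is clear, since polynomials are $C^{(n)}$ and \cref{propo:RIBP} is valid for arbitrary $\boldsymbol{\theta}$. No information about the specific kernel is needed, because the right-hand side is removed by the choice of test functions of degree at most $n-1$.
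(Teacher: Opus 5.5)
Your proof is correct and is essentially the paper's argument: exactness of $H_n$ on $\mc{P}^{2n-1}\LRs{a,b}$ gives \cref{eq:orthogonality}, test functions of degree $\le n-1$ kill the kernel term, and your choice $(x-b)^{j_0}/j_0!$ isolates each $w_{j_0}^b-\alpha_{j_0}^b$ directly; you also write out the converse, which the paper leaves to symmetry. Two small corrections: the paper's choice $(x-a)^{j_0}/j_0!$ is mis-justified but not actually wrong, since it yields the unitriangular system $\sum_{j\le j_0}(w_j^b-\alpha_j^b)(b-a)^{j_0-j}/(j_0-j)!=0$ (so ``must be replaced'' is too strong), and the opening claim of your ``on closer inspection'' paragraph is false and should be deleted rather than retracted mid-paragraph.
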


\subsection{Polynomial kernels $K_n\LRp{x,\boldsymbol{\theta}^*}$ are shifted Legendre polynomials}
Now, revisiting \cref{eq:orthogonality} and using \cref{thm:redundancy} yield interesting properties of the polynomial kernel $K_n\LRp{x,\boldsymbol{\theta}}$.
\begin{corollary}
    Suppose there exists  a set of parameters $\boldsymbol{\theta}^*$ such that $\alpha_j^a = w_j^a, 0\le j \le n-1$. Then
    \begin{equation}
        \int_a^b  f(x) K_n\LRp{x,\boldsymbol{\theta}^*}\,dx = 0, \quad \forall f \in \mc{P}^{n-1}\LRs{a,b},
\label{eq:KernelOrthogonality}
    \end{equation}
    which in turns implies that $K_n\LRp{x,\boldsymbol{\theta}^*}$ is, up to a sign, symmetric around $x = (a+b)/2$, i.e.,
\begin{equation}
    K_n\LRp{a+b-x,\boldsymbol{\theta}^*}  = 
(-1)^nK_n\LRp{x,\boldsymbol{\theta}^*}.
\label{eq:KnSymmetry}
\end{equation}
\label{coro:orthogonality}
\end{corollary}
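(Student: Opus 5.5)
The orthogonality relation \cref{eq:KernelOrthogonality} should follow almost immediately from \cref{eq:orthogonality} combined with \cref{thm:redundancy}. The symmetry \cref{eq:KnSymmetry} I would then deduce by a uniqueness argument: the difference of the two sides of \cref{eq:KnSymmetry} is a polynomial of degree at most $n-1$ that is orthogonal to $\mc{P}^{n-1}\LRs{a,b}$, and so it must vanish.

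\emph{Step 1 (orthogonality).} By hypothesis $\alpha_j^a = w_j^a$ for $0\le j\le n-1$, so \cref{thm:redundancy} gives $\alpha_j^b = w_j^b$ for the same range of $j$. The left-hand side of \cref{eq:orthogonality} therefore vanishes, and
\[
\int_a^b f^{(n)}(x) K_n\LRp{x,\boldsymbol{\theta}^*}\,dx = 0 \quad \forall f\in \mc{P}^{2n-1}\LRs{a,b}.
\]
Now take any $g\in\mc{P}^{n-1}\LRs{a,b}$ and let $f$ be an $n$-fold antiderivative of $g$. Then $f\in\mc{P}^{2n-1}\LRs{a,b}$ and $f^{(n)}=g$. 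Hence $\int_a^b g\,K_n(\cdot,\boldsymbol{\theta}^*)\,dx=0$ for every $g\in\mc{P}^{n-1}\LRs{a,b}$, which is \cref{eq:KernelOrthogonality}.

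\emph{Step 2 (symmetry).} Define
\[
L(x):=K_n\LRp{a+b-x,\boldsymbol{\theta}^*}-(-1)^nK_n\LRp{x,\boldsymbol{\theta}^*}.
\]
By \cref{eq:Kn}, the only degree-$n$ term of $K_n(x,\boldsymbol{\theta}^*)$ is $x^n/n!$, since the $\delta_i$ terms have degree at most $n-2$. The degree-$n$ term of $K_n(a+b-x,\boldsymbol{\theta}^*)$ is $(-x)^n/n!=(-1)^n x^n/n!$. These cancel in $L$, so $L\in\mc{P}^{n-1}\LRs{a,b}$.

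Next, for any $g\in\mc{P}^{n-1}\LRs{a,b}$, substitute $y=a+b-x$, which maps $[a,b]$ onto itself:
\[
\int_a^b g(x)K_n\LRp{a+b-x,\boldsymbol{\theta}^*}\,dx=\int_a^b g(a+b-y)K_n\LRp{y,\boldsymbol{\theta}^*}\,dy = 0 .
\]
The last integral vanishes by Step 1, because $g(a+b-\cdot)$ also lies in $\mc{P}^{n-1}\LRs{a,b}$. The term $(-1)^n K_n(x,\boldsymbol{\theta}^*)$ is orthogonal to $g$ directly by Step 1. Hence $L$ is orthogonal to all of $\mc{P}^{n-1}\LRs{a,b}$.

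Taking $g=L$ gives $\int_a^b L^2\,dx=0$. Since $L$ is a polynomial, $L\equiv 0$, which is exactly \cref{eq:KnSymmetry}.

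The only delicate point is the degree count in Step 2. It needs the observation that the free parameters enter $K_n$ only through terms of degree at most $n-2$, so that the leading terms cancel exactly with the sign $(-1)^n$. Everything else is a direct substitution.
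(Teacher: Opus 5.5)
Your proof is correct and follows essentially the paper's route. The paper calls the orthogonality \cref{eq:KernelOrthogonality} clear, and your $n$-fold antiderivative argument via \cref{thm:redundancy} supplies exactly that missing step. For the symmetry, the paper uses the same difference polynomial $P(x)=K_n(a+b-x,\boldsymbol{\theta}^*)-(-1)^nK_n(x,\boldsymbol{\theta}^*)$ of degree at most $n-1$ (you justify the leading-term cancellation, which the paper only asserts), the same reflection substitution, and the same choice $f=P$ to get $\int_a^b P^2\,dx=0$.
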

\begin{proof}
    We prove the second assertion, as the  first one is clear. Let us define
    \[
    P\LRp{x} : = K_n\LRp{a+b-x,\boldsymbol{\theta}^*}  -
(-1)^nK_n\LRp{x,\boldsymbol{\theta}^*}.
    \]
Then, $P\LRp{x}$ is a polynomial of order at most $n-1$. Now from \cref{eq:orthogonality} we have, for any $f \in \mc{P}^{n-1}\LRs{a,b}$,
\[
\int_a^b  f(a+b-x) K_n\LRp{x,\boldsymbol{\theta}^*}\,dx = 0,
\]
which implies
\[
\int_a^b  f(x) K_n\LRp{a+b-x,\boldsymbol{\theta}^*}\,dx = 0.
\]
Combining the last two equations gives
\[
\int_a^b  f(x) P\LRp{x}\,dx = 0, \quad \forall f \in \mc{P}^{n-1}\LRs{a,b},
\]
which, after taking $f(x) = P\LRp{x}$, becomes
\[
\int_a^b  P^2\LRp{x}\,dx = 0,
\]
and this concludes the proof.
\end{proof}
\begin{remark}[$K_n$ is the shifted unnormalized $n$th-order Legendre polynomial]
    The orthogonality condition \cref{eq:orthogonality} of the kernel $K_n$ shows that it is  the (shifted and unnormalized) $n$th-order Legendre polynomial. In other words, we have rediscovered Legendre polynomials.
\end{remark}

\subsection{Rediscovery of Rodrigues formula when $f^{(2n)}$ is available}
In this section, we explore the orthogonality conditions of the polynomial kernels $K_n$ in \cref{eq:orthogonality}, and the classical error representation \cref{int_h} of the Hermite quadrature to derive the Rodrigues formula for Legendre polynomials. We begin by definining $j$th antiderivative for $K_n$ as follows
\[
K_n^j\LRp{x,\boldsymbol{\theta}^*} = \int_a^x K_n^{j-1}\LRp{y,\boldsymbol{\theta}^*}\,dy, \quad j = 1, \cdots,n,
\]
where
\[
K_n^0\LRp{x,\boldsymbol{\theta}^*} := K_n\LRp{x,\boldsymbol{\theta}^*}.
\]

In the following, we suppress $\boldsymbol{\theta}^*$ for readability.

\begin{lemma}
    There holds:
    \[
    K_n^j\LRp{a} = K_n^j\LRp{b} = 0, \quad \forall j=1,\cdots,n,
    \]
    and thus there exist a constant $C$ such that
    \begin{equation}
            K_n^n\LRp{x} = C\LRp{x-a}^n\LRp{x-b}^n.
    \label{eq:Knn}
    \end{equation}
    \label{lem:Knn}
\end{lemma}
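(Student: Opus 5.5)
Throughout, $K_n$ denotes the kernel at the parameters $\boldsymbol{\theta}^*$ of \Cref{coro:orthogonality}. The proof has two steps: first the vanishing of the antiderivatives at the endpoints, then a root-multiplicity count.

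\textbf{Step 1: vanishing at the endpoints.} By definition every antiderivative starts at $a$, so $K_n^j(a)=0$ for $j=1,\dots,n$ trivially. For the value at $b$ I would not integrate explicitly but show by repeated integration by parts that
\[
K_n^j(x)=\int_a^x \frac{(x-y)^{j-1}}{(j-1)!}K_n(y)\,dy ,
\]
which is Cauchy's formula for repeated integration. Evaluating at $x=b$ gives
\[
K_n^j(b)=\int_a^b \frac{(b-y)^{j-1}}{(j-1)!}K_n(y)\,dy .
\]
For $1\le j\le n$ the factor $(b-y)^{j-1}/(j-1)!$ lies in $\mc{P}^{n-1}\LRs{a,b}$. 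The orthogonality relation \cref{eq:KernelOrthogonality} of \Cref{coro:orthogonality} then gives $K_n^j(b)=0$.

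\textbf{Step 2: root multiplicities.} Consider the polynomial $K_n^n$. Since $K_n$ has degree exactly $n$ (its leading term is $x^n/n!$), $K_n^n$ has degree exactly $2n$. For each $0\le m\le n-1$ we have $(K_n^n)^{(m)}=K_n^{n-m}$, with $n-m\in\{1,\dots,n\}$. By Step 1, all derivatives of orders $0,\dots,n-1$ of $K_n^n$ vanish at $a$ and at $b$. Hence $a$ and $b$ are roots of multiplicity at least $n$, so $(x-a)^n(x-b)^n$ divides $K_n^n$. Comparing degrees, the quotient is a constant $C$, which proves \cref{eq:Knn}. Matching leading coefficients identifies $C=\frac{1}{(2n)!}$, although the lemma only asks for existence.

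\textbf{Main obstacle.} The only real content is $K_n^j(b)=0$, and it rests entirely on the orthogonality in \Cref{coro:orthogonality}. Hence the lemma requires that the matching parameters $\boldsymbol{\theta}^*$ exist, which I take as the standing assumption of this subsection. Deriving the Cauchy formula is routine, by induction on $j$ with a single integration by parts per step. An equivalent alternative, if preferred, is induction with $\int_a^b K_n^{j-1}=K_n^j(b)$ combined with integrating $K_n$ against $(b-y)^{j-1}$.
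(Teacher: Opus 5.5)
Your proof is correct and has the same structure as the paper's. You get the vanishing of $K_n^j$ at $b$ from the orthogonality \cref{eq:KernelOrthogonality} via integration by parts, then use the same root-multiplicity count. The only difference is that your Cauchy-formula test polynomial $(b-y)^{j-1}/(j-1)!$ removes all boundary terms at once, while the paper tests against $x^j/j!$ and needs induction on $j$ to remove them.

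Your leading-coefficient remark giving $C=1/(2n)!$ is also correct. It is more direct than the paper's later derivation of $C$ in \Cref{thm:Rodrigues}, which goes through the classical error formula with $f=x^{2n}$.
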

\begin{proof}
    We proceed by induction and by definition we only need to show \\ $K_n^j\LRp{b,\boldsymbol{\theta}^*} = 0$. For $j = 1$, it is clear from the orthogonality condition \cref{eq:orthogonality} (by taking $f = 1$) that 
    \[
     K_n^1\LRp{b} = \int_{a}^b K_n\LRp{y}\,dy = 0.
    \]
    Now suppose that the assertion holds up to $j$ with $j \le n-1$, that is, $K_n^\ell\LRp{b} = 0$ for $1 \le \ell \le j\le n-1$.
    By taking $f = x^{j}/{j!}$ in \cref{eq:orthogonality} 
    and by integrating by parts $j$ times, we have
    \begin{multline*}
    0 = \int_{a}^b \frac{x^{j}}{j!} K_n\LRp{x}\,dx = \sum_{k=0}^{j-1} (-1)^k 
\left[ K_n^{k+1}(x) \frac{x^{j-k}}{\LRp{j-k}!} \right]_{x=a}^{x=b} \\  + \LRp{-1}^{j} \int_{a}^b  K_n^{j}\LRp{x}\,dx = \LRp{-1}^{j} \int_{a}^b  K_n^{j}\LRp{x}\,dx = \LRp{-1}^{j} K_n^{j+1}\LRp{b},
    \end{multline*}
where we have used the induction hypothesis in the third equality.

For the second assertion, we note that $K_n^n$ is a polynomial of order at most $2n$ since $K_n^0$ is a polynomial of order $n$. On the other hand, the first assertion implies that $K_n^n$ and its first $n-1$ derivatives vanish at both $a$ and $b$. Thus $K_n^n$ must have the desirable form in \cref{eq:Knn}.
\end{proof}

\begin{theorem}[Rediscovery of the Rodrigues formula]
    There holds:
    \[
    K_n^n\LRp{x} = \frac{1}{\LRp{2n}!}(x-a)^n(x-b)^n,
    \]
    or equivalently
    \[
    K_n\LRp{x} = \frac{1}{\LRp{2n}!}\frac{d^{n} }{dx^{n}}(x-a)^n(x-b)^n.
    \]
    \label{thm:Rodrigues}
\end{theorem}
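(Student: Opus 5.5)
By \cref{lem:Knn} we already know that $K_n^n(x) = C(x-a)^n(x-b)^n$ for some constant $C$. The whole proof therefore reduces to identifying $C$. The second form of the statement then follows by differentiating $n$ times: $K_n = \frac{d^n}{dx^n}K_n^n$ by construction of the iterated antiderivatives.

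The cleanest way to get $C$ is to compare leading coefficients, with no reference to quadrature at all. By \cref{eq:Kn}, $K_n(x,\boldsymbol{\theta}^*) = \frac{(x+c)^n}{n!} + \sum_{i=0}^{n-2}\delta_i \frac{x^i}{i!}$. Its leading term is $x^n/n!$, and the $\delta_i$ terms only affect degrees at most $n-2$. Each antiderivative $K_n^j(x)=\int_a^x K_n^{j-1}$ raises the degree by one and divides the leading coefficient by the new degree. Hence the leading term of $K_n^n$ is
\[
\frac{x^{2n}}{(2n)!}.
\]
On the other hand, the leading term of $C(x-a)^n(x-b)^n$ is $Cx^{2n}$, so $C = 1/(2n)!$.

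Alternatively, in the spirit of the section title, one can match against the classical error \cref{int_h}. Take $f$ with $f^{(2n)}$ available, and integrate the error term $(-1)^n\int_a^b f^{(n)} K_n\,dx$ of \cref{eq:RIBP-general} by parts $n$ more times. Boundary terms vanish because $K_n^j(a)=K_n^j(b)=0$ by \cref{lem:Knn}, which yields $E_n = \int_a^b f^{(2n)}(x) K_n^n(x)\,dx$. Testing with $f = x^{2n}$ then gives
\[
(2n)!\,C\int_a^b (x-a)^n(x-b)^n\,dx = \int_a^b (x-a)^n(x-b)^n\,dx ,
\]
using the classical representation \cref{int_h} with $f^{(2n)}\equiv (2n)!$. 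Since the integral is nonzero (its sign is $(-1)^n$), again $C=1/(2n)!$.

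I would present the leading-coefficient argument as the proof and mention the second as the link to \cref{int_h}. The only point needing care is bookkeeping of signs in the $n$ additional integrations by parts, and the $(-1)^n$ sign convention in \cref{eq:RIBP-general}. These do not affect the leading-coefficient route, so I expect no real obstacle there.
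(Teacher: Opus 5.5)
Your argument is correct, and your main route differs from the paper's.

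The paper's proof is exactly your ``alternative''. It equates the classical representation \cref{int_h} with the error term of \cref{eq:RIBP-general}. It then integrates by parts $n$ more times, with boundary terms vanishing by \cref{lem:Knn}, to get $E_n=\int_a^b f^{(2n)}(x)K_n^n(x)\,dx$ for all $f\in C^{(2n)}[a,b]$. Finally it tests with $f=x^{2n}$. Your version of this is sound, and it states explicitly that $\int_a^b (x-a)^n(x-b)^n\,dx\neq 0$, which the paper leaves implicit.

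Your leading-coefficient argument uses only that $K_n^{(n)}(x,\boldsymbol{\theta})=1$ by construction of \cref{eq:Kn}. Then $n$ antiderivatives force the $x^{2n}$-coefficient of $K_n^n$ to be $1/(2n)!$, and comparing with \cref{eq:Knn} fixes $C$. This is shorter and needs strictly less input. Hermite exactness on $\mc{P}^{2n-1}\LRs{a,b}$ enters only through \cref{lem:Knn}, via the orthogonality \cref{eq:KernelOrthogonality}. The classical error formula with its $f^{(2n)}(\varepsilon)$ term is never used. It is the same device the paper itself uses in step 4 of its Peano-kernel proposition.

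What the paper's route buys is the intermediate identity $E_n=\int_a^b f^{(2n)}K_n^n\,dx$. That identity shows that for $f\in C^{(2n)}[a,b]$ the new $C^{(n)}$ representation collapses to the classical (Peano-kernel) one, which is the link the section title advertises. So presenting your coefficient argument as the proof and the integration-by-parts computation as a remark, as you propose, loses nothing.
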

\begin{proof}
 Combining the classical error representation \cref{int_h} and our error repressentation \cref{eq:RIBP-general} yields
    \begin{multline*}
        \int_a^b\frac{f^{(2n)}(\varepsilon)}{(2n)!}\LRp{(x-a)(x-b)}^n\, dx = \int_a^b (-1)^n f^{(n)}(x)\,K_n(x)\,dx, \\
        = \sum_{k=0}^{n-1} (-1)^{k+n} 
\left[ f^{(n+k)}(x) K_n^{k+1}(x) \right]_{x=a}^{x=b}
 + \int_a^b f^{(2n)}(x) K_n^{n}(x)\,dx \\
 = \int_a^b f^{(2n)}(x) K_n^{n}(x)\,dx, \quad \forall f \in C^{(2n)}\LRs{a,b},
    \end{multline*}
    where we have applied the integration by parts $n$ times in the second equality and \cref{lem:Knn} in the last equality. Finally taking $f\LRp{x} = x^{2n}$ yields $C = 1/(2n)!$ and this completes the proof.
\end{proof}

\begin{remark}
    Note that \cref{thm:Rodrigues} is the Rodrigues formula for unnormalized and shifted Legendre functions $K_n\LRp{x}$.
\end{remark}

\subsection{Uniqueness of the parameters from matching conditions}

We are now in a position to study the existence and uniqueness of the free parameters from \cref{eq:HermiteCondition}. The answer turns out to be straighforward. From \cref{thm:redundancy}, we only need to consider the set of $n$ equations $\alpha_j^a = w_j^a, 0\le j \le n-1$ to solve for $n$ parameters $\boldsymbol{\theta}$.
These equations possess an obvious triangular structure that allows us to solve for all free parameters in a straightforward manner. In particular, for $j=0$, the corresponding equation $\alpha_0^a = w_0^a$ always yields $c = -\LRp{a+b}/2$ as we have shown above. For $j=1$, the corresponding equation $\alpha_1^a = w_1^a$ gives us a linear monomial equation in terms of $\delta_{n-2}$. If we continue this triangular structure, for general $1\le j \le n-1$, the corresponding equation $\alpha_j^a = w_j^a$ gives us a recursive formula for $\delta_{n-1-j}$:
\[
\delta_{n-1-j} = (-1)^{j+1}w_j^a -\frac{(a+c)^{j+1}}{(j+1)!}
 - \sum_{i=1}^{j-1} \delta_{\,i+n-1-j}\frac{a^{i}}{i!}.
\]

Let us denote $\boldsymbol{\theta}^*$ as the unique set of parameters from \cref{eq:HermiteCondition}. \Cref{tab:deltas} presents the closed-form expressions for all components of $\boldsymbol{\theta}^*$ for $n\in \LRc{2,3,4,5}$, as a function of $a$ and $b$. For general $n > 5$, we provide four components $c,\delta_{n-2},\delta_{n-3},\delta_{n-4}$ of $\boldsymbol{\theta}^*$.

\begin{sidewaystable}[h!]
\centering
\renewcommand{\arraystretch}{1.6}
\begin{tabular}{|c|c|c|c|c|}
\hline
$n$ 
& $c$ 
& $\delta_{n-2}$ 
& $\delta_{n-3}$ 
& $\delta_{n-4}$ 
\\
\hline
$2$ 
& $\displaystyle -\frac{a+b}{2}$ 
& $\displaystyle -\frac{(b-a)^2}{24}$ 
& -- 
& --
\\
\hline
$3$ 
& $\displaystyle -\frac{a+b}{2}$ 
& $\displaystyle -\frac{(b-a)^2}{40}$ 
& $\displaystyle \frac{(a+b)(b-a)^2}{80}$ 
& --
\\
\hline
$4$ 
& $\displaystyle -\frac{a+b}{2}$ 
& $\displaystyle -\frac{(b-a)^2}{56}$ 
& $\displaystyle \frac{(a+b)(b-a)^2}{112}$ 
& $\displaystyle 
  \frac{(b-a)^2\bigl((b-a)^2 -10(a+b)^2\bigr)}{4480}$
\\
\hline
$5$ 
& $\displaystyle -\frac{a+b}{2}$ 
& $\displaystyle -\frac{(b-a)^2}{72}$ 
& $\displaystyle \frac{(a+b)(b-a)^2}{144}$ 
& $\displaystyle 
  \frac{(b-a)^2\bigl((b-a)^2 -14(a+b)^2\bigr)}{8064}$
\\
\hline
$\text{General }n$ 
& $\displaystyle -\frac{a+b}{2}$ 
& $\displaystyle -\frac{(b-a)^2}{8(2n-1)}$ 
& $\displaystyle \frac{(a+b)(b-a)^2}{16(2n-1)}$ 
& $\displaystyle
  \frac{(b-a)^2\bigl((b-a)^2 + (6-4n)(a+b)^2\bigr)}
       {128(2n-3)(2n-1)}$
\\
\hline
\end{tabular}

\caption{Summary of kernel parameters 
$\boldsymbol{\theta}^* = \LRc{c,\delta_{n-2},\delta_{n-3},\delta_{n-4}}$ 
for the matching conditions \cref{eq:HermiteMatching}. The first four rows are for special cases $n\in \LRc{2,3,4,5}$, and the last row is for the first four free parameters for general $n$.}
\label{tab:deltas}
\end{sidewaystable}
\section{Error bounds for a general $n$ in \cref{eq:hermiteQuadrature}}
\label{sect:generaln}
\subsection{General bounds for $E_n$ with uniform and $L_2$ norms when only $f^{(n)}$ is available}
Recall that when the matching conditions in \cref{lem:HermiteMatching} hold, the exact error representation
\begin{equation}
E_n = \int_a^b (-1)^n f^{(n)}(x)\,K_n(x,\boldsymbol{\theta}^*)\,dx,
\label{eq:En}
\end{equation}
in \cref{eq:RIBP-general} requires only the $n$th-order derivative $f^{(n)}(x)$ to exist (such that $E_n$ is finite), whereas the classical error representation for Hermite quadrature in \cref{int_h} would demand $f^{(2n)}(x)$. Following similar steps that we carried out for $n=2$, we can obtain upper bounds for $E_n$. 

If only $f^{(n)}(x)$ is available (suppose that it is continuous), then the orthogonality result in \cref{eq:orthogonality} allows us to generalize the results in \cref{propo:L2Inf_norm}. Specifically, let us define the midrange of the $n$th-order derivative as
$\beta_n:= \frac{\inf_{\LRs{a,b}} f^{(n)}(x) 
 +\sup_{\LRs{a,b}} f^{(n)}(x) }{2}$
and
the mean as $\overline{f^{(n)}} = \frac{1}{b-a}\int_a^b f^{(n)}(x)\, dx$. In addition, define the deviations as
\[
\phi(x) := {f^{(n)}(x) - \beta_n}, \quad
\text{and }
\Phi(x) := {f^{(n)}(x) - \overline{f^{(n)}}}.
\]
\begin{proposition}
    There hold:
    \begin{equation*}
|E_n| \leq \nor{\phi}_\infty \int_{a}^b\snor{K_n\LRp{x,\boldsymbol{\theta}^*}}\,dx, \text{ and }
|E_n| \leq \nor{\Phi}_2 \nor{K_n\LRp{x,\boldsymbol{\theta}^*}}_2.
\end{equation*}
\end{proposition}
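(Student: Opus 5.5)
The argument mirrors \Cref{propo:L2Inf_norm}: we subtract a constant from $f^{(n)}$ in the exact error representation \cref{eq:En} without changing its value, and then apply H\"older (uniform case) or Cauchy--Schwarz ($L_2$ case).

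\textbf{Step 1 (key step).} We show that $K_n\LRp{x,\boldsymbol{\theta}^*}$ has zero mean on $[a,b]$. Take $f\equiv 1\in\mc{P}^{n-1}\LRs{a,b}$ in the orthogonality relation \cref{eq:KernelOrthogonality} of \Cref{coro:orthogonality}. Equivalently, this is the statement $K_n^1(b)=0$ from \Cref{lem:Knn}. It gives
\[
\int_a^b K_n\LRp{x,\boldsymbol{\theta}^*}\,dx = 0 .
\]
This is the only step that needs anything beyond routine estimates. Its validity depends on $\boldsymbol{\theta}^*$ satisfying the matching conditions, which is guaranteed by the uniqueness and existence discussion of the previous subsection.

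\textbf{Step 2.} Since the kernel integrates to zero, for any constant $\beta$
\[
E_n = (-1)^n\int_a^b \LRp{f^{(n)}(x)-\beta}K_n\LRp{x,\boldsymbol{\theta}^*}\,dx .
\]
We use this with two choices of $\beta$:
\begin{itemize}
\item With $\beta=\beta_n$ the integrand factor is $\phi$, so
\[
|E_n|\le \nor{\phi}_\infty\int_a^b\snor{K_n\LRp{x,\boldsymbol{\theta}^*}}\,dx .
\]
This uses continuity of $f^{(n)}$, so that $\beta_n$ is finite and $\nor{\phi}_\infty=\sup_{[a,b]}|f^{(n)}-\beta_n|$.
\item With $\beta=\overline{f^{(n)}}$ the integrand factor is $\Phi$, and Cauchy--Schwarz gives
\[
|E_n|\le \nor{\Phi}_2\nor{K_n\LRp{x,\boldsymbol{\theta}^*}}_2 .
\]
\end{itemize}
The sign $(-1)^n$ disappears under absolute values in both bounds, which completes the argument.
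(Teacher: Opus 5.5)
Your proposal is correct and follows essentially the paper's route: the paper obtains this proposition by repeating the proof of \Cref{propo:L2Inf_norm} for general $n$, using the orthogonality \cref{eq:KernelOrthogonality} with $f\equiv 1$ to get $\int_a^b K_n(x,\boldsymbol{\theta}^*)\,dx=0$. Replacing $f^{(n)}$ by $\phi$ or $\Phi$ in \cref{eq:En} and applying the uniform bound or Cauchy--Schwarz, respectively, then gives the two estimates, exactly as you do.
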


\subsection{General bounds for $E_n$ with uniform and $L_2$ norms when $f^{(n+k)}$ is available}

For this section we suppose that $f^{(n+k)}$, $0\le k < n-1$, are available and continuous. We again capitalize on the orthogonality relation \cref{eq:KernelOrthogonality} to upper-bound \cref{eq:En} using triangle inequality as
\begin{multline*}
    \snor{E_n} = \snor{\int_a^b  \LRs{f^{(n)}(x)-p_k\LRp{x}}\,K_n(x,\boldsymbol{\theta}^*)\,dx,} \\ \le 
\nor{f^{(n)}(x)-p_k\LRp{x}}_\infty
\int_a^b  \snor{K_n(x,\boldsymbol{\theta}^*)}\,dx,
\end{multline*}
where $p_k\LRp{x}$ is any polynomial of order at most $k$. The best polynomial $p_k^*\LRp{x}$ solves the following minimax problem
\[
p_k^*\LRp{x} = \arg\min_{p_k} \nor{f^{(n)}(x)-p_k\LRp{x}}_\infty,
\]
which is challenging to solve analytically. We can resort to a computational suboptimal solution $p_k^*\LRp{x}$ using Chebyshev interpolation at $k+1$ Chebyshev's nodes. If we take $a = -1$ and $b=1$ for clarity, the error bound is given as
\[
\snor{E_n} \le \frac{1}{2^{k}\LRp{k+1}!}\nor{f^{(n+k+1)}}_\infty \int_{-1}^{1} \snor{K_n(x,\boldsymbol{\theta}^*)}\,dx.
\]
On the other hand, if we invoke the Cauchy-Schwarz inequality, then the bound reads
\[
\snor{E_n} \le 
\nor{f^{(n)}-p_k}_2
\nor{K_n}_2. 
\]
The best polynomial $p_k^*\LRp{x}$ in this case solves the following least-squares problem
\[
p_k^*\LRp{x} = \arg\min_{p_k} \nor{f^{(n)}(x)-p_k\LRp{x}}_2,
\]
which can be solved in the closed forms, but we omit the details.

\section{Error derivation for Hermite quadrature using Peano kernel theorem}
In this section, we briefly discuss an alternative error derivation using Peano kernel theorem when the integrand resides in $C^{(2n)}\LRs{a,b}$. Let us define the following linear operator
\[
\L f := \int_{a}^b \LRs{f\LRp{x}-H_n\LRp{f;\ x}}\,dx,
\]
then $\L f = 0$ for all $f \in \mc{P}^{2n-1}\LRs{a,b}$ due to the exactness of Hermite quadrature. Furthermore, owing to the linearity of the integrals and the Hermite interpolations we have: $\forall g\LRp{x,y}\in \mc{P}^{2n}\LRs{a,b}$,
\begin{multline*}
\L \int_{a}^b g\LRp{x,y}\,dy = \int_{a}^b \int_{a}^b g\LRp{x,y}\,dy dx - \int_{a}^b H_n\LRp{\int_{a}^b g\LRp{x,y}\,dy;\ x} \,dx \\
= \int_{a}^b\int_a^b \LRs{g\LRp{x,y}\,dx - H_n\LRp{g\LRp{x,y};\ x}}\,dx\,dy = \int_a^b\L g\LRp{x,y}\,dy.
\end{multline*}
An application of the Peano kernel theorem \cite{Scott2011NumericalAnalysis,Iserles2008FirstCourseNADE} then gives
\[
\L f =  \int_a^b f^{(2n)}\LRp{x}K\LRp{x}\,dx, \quad \forall f \in C^{(2n)}\LRs{a,b},
\]
where the Peano kernel $K\LRp{x}$ is given by
\[
K\LRp{x} := \L \LRs{ \frac{\LRp{y-x}^{2n-1}_+}{\LRp{2n-1}!}}, \quad \text{ with }
\quad \LRp{y-x}_+ := \begin{cases}
    y-x & \text{if } y > x, \\
    0 & \text{otherwise.}
\end{cases}
\]
\begin{proposition}
    The Peano kernel theorem yields the same result as RIPB when $f^{(2n)}$ is available. In particular, $K\LRp{x} = \frac{1}{\LRp{2n}!} \LRp{x-a}^n\LRp{x-b}^n$.
\end{proposition}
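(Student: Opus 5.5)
The plan is to compute the Peano kernel $K(x) = \L\big[(y-x)_+^{2n-1}/(2n-1)!\big]$ directly and identify it with $K_n^n$ from \cref{lem:Knn} and \cref{thm:Rodrigues}. Fix $x\in(a,b)$ and set $g_x(y) := (y-x)_+^{2n-1}/(2n-1)!$. This function lies in $C^{(2n-2)}$, and in particular in $C^{(n)}[a,b]$, since $n\le 2n-2$ for $n\ge 2$; the case $n=1$ can be handled by a limiting or direct check. Hence the exact RIBP error representation \cref{eq:En} applies to $g_x$. This gives
\[
K(x) = \L g_x = \int_a^b (-1)^n g_x^{(n)}(y)\,K_n(y,\boldsymbol{\theta}^*)\,dy,
\]
because $\L g_x$ is exactly the Hermite quadrature error $E_n$ for $g_x$, by \cref{lem:HermiteMatching} and \cref{eq:RIBP-general}.

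Next, integrate by parts $n-1$ further times in $y$, moving derivatives onto $g_x$ and antiderivatives onto $K_n$. I would use the iterated antiderivatives $K_n^j$ together with the boundary vanishing $K_n^j(a)=K_n^j(b)=0$ from \cref{lem:Knn}, exactly as in the proof of \cref{thm:Rodrigues}. The boundary terms disappear, and after $n-1$ steps we are left with
\[
K(x) = \pm\int_a^b g_x^{(2n-1)}(y)\,K_n^{n-1}(y)\,dy .
\]
All intermediate derivatives $g_x^{(m)}$ with $m\le 2n-2$ are continuous, so each integration by parts is legitimate.

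For the final step, note that $g_x^{(2n-1)}(y)=\mathbf{1}_{\{y>x\}}$. Therefore
\[
K(x) = \pm\int_x^b K_n^{n-1}(y)\,dy = \pm\big(K_n^n(b)-K_n^n(x)\big) = \mp K_n^n(x).
\]
By \cref{thm:Rodrigues} this equals $\frac{1}{(2n)!}(x-a)^n(x-b)^n$ up to sign. The remaining work is to check that the signs combine to $+$: one factor $(-1)^n$ comes from \cref{eq:En}, one $(-1)^{n-1}$ from the integrations by parts, and one $-1$ from the last step, giving $(-1)^{2n}=+1$.

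As an independent check, and as a fallback if sign bookkeeping is doubtful, I would compare with the classical representation. Both forms hold for all $f\in C^{(2n)}$, namely $\L f=\int f^{(2n)}K$ and $\L f=\int f^{(2n)}K_n^n$, the latter from the proof of \cref{thm:Rodrigues}. Their difference integrates to zero against every continuous $f^{(2n)}$, which forces $K=K_n^n$ because both are continuous. This second route is arguably cleaner, needs only the density of $\{f^{(2n)}\}$, which is all of $C[a,b]$, and sidesteps the low regularity of $g_x$ entirely. I expect the only real obstacle in the direct route to be justifying RIBP for the nonsmooth $g_x$ and tracking signs, so I would present the comparison argument as the main proof.
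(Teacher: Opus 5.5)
Your proof is correct, but it runs in the opposite direction from the paper's. The paper's sketch never uses RIBP. It evaluates $K(x)=\L\bigl[(y-x)_+^{2n-1}/(2n-1)!\bigr]$ directly: for $x\in[a,b]$ the truncated power and its first $n-1$ derivatives vanish at $y=a$, so only the $b$-weights survive, giving $K(x)=\frac{(b-x)^{2n}}{(2n)!}-\sum_{k=0}^{n-1}w_k^b\frac{(b-x)^{2n-1-k}}{(2n-1-k)!}$. The lowest power of $(b-x)$ is $n$, so $(b-x)^n$ divides $K$. Exactness on $\mc{P}^{2n-1}\LRs{a,b}$ gives $K^{(j)}(a)=0$ for $j\le n-1$, so $(x-a)^n$ divides $K$. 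Degree $2n$ with leading coefficient $1/(2n)!$ then fixes the constant.

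You instead identify $K$ with $K_n^n$ and then import the closed form from \cref{thm:Rodrigues}. You do this in one of two ways. The first feeds $g_x\in C^{(2n-2)}$ into the RIBP representation \cref{eq:En} and integrates by parts using \cref{lem:Knn}; this is a nice use of the fact that RIBP needs only $C^{(n)}$. Your worry about $n=1$ is harmless, because the RIBP identity only needs $f^{(n-1)}$ absolutely continuous, which $(y-x)_+$ is. The second is the uniqueness argument: two continuous kernels representing the same functional against all of $C\LRs{a,b}$ must coincide, and continuity of $K$ is immediate from the polynomial formula above. Your sign bookkeeping is right, and there is no circularity, since \cref{thm:Rodrigues} precedes and does not use this proposition.

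Your route is shorter and proves literally the ``same result as RIBP'' half of the statement, namely the identity $K=K_n^n$. The paper's route is self-contained and independent of the RIBP machinery (\cref{thm:redundancy}, \cref{lem:Knn}, \cref{thm:Rodrigues}). That independence is what lets this section serve as a genuinely alternative derivation of the $C^{(2n)}$ error formula, rather than a corollary of the earlier results.
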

\begin{proof}
    We sketch a proof in a few steps here.
    \begin{enumerate}
        \item Using the defintion of the linear operator $\L$ and the Peano kernel, we have
        \begin{equation}
                    K\LRp{x} = \frac{\LRp{b-x}^{2n}}{\LRp{2n}!} - \sum_{k=0}^{n-1}w_k^b\frac{\LRp{b-x}^{2n-1-k}}{\LRp{2n-1-k}!}.
        \label{eq:PeanoKernel}
        \end{equation}
        \item Since the lowest power for $\LRp{b-x}$ in $K\LRp{x}$ is $n$, $\LRp{b-x}^n$ divides $K\LRp{x}$.
        \item Using the exactness of Hermite quadrature for polynomial of order less than or equal $2n-1$, we can show that
        \[
        K^{(j)}\LRp{a} = 0, \quad \forall j=0,\hdots,n-1,
        \]
        which then implies that $\LRp{x-a}^n$ divides $K\LRp{x}$.
        \item At this point, we know that $K\LRp{x}$ must have the form
        \[
        K\LRp{x} = C\LRp{x-a}^n \LRp{x-b}^n,
        \]
        since the highest power for $x$ in \cref{eq:PeanoKernel} is $2n$. Since the coefficient for the highest power for $x$ in \cref{eq:PeanoKernel} is $\LRp{2n}!$, we conclude that
        \[
        K\LRp{x} = \frac{1}{\LRp{2n}!}\LRp{x-a}^n \LRp{x-b}^n,
        \]
        and this concludes the proof.
    \end{enumerate}
\end{proof}
\begin{remark}
    Similar to \cref{her_error}, the Peano kernel theorem approach also requires the same $C^{(2n)}$-regularity, which is much stringent than our approach that requires only $C^{(n)}$-regularity.
\end{remark}

\section{Composite Hermite quadrature rule}
This section briefly discuss an extension of the Hermite quadrature rule on one interval to multiple ones. Let $[a, b]$ be partitioned into $m$ subintervals:
\[
a = x_0 < x_1 < x_2 < \cdots < x_{m-1} < x_m = b,
\]
and define the subinterval widths as
\[
h_i = x_{i+1} - x_i, \quad i = 0, 1, \ldots, m-1.
\]
On each subinterval $[x_i, x_{i+1}]$, we use the two-point Hermite interpolation polynomial $H_n^{i}(f;\ x)$ that matches: i) $f^{(k)}(x_i)$ for $k = 0, 1, \ldots, n-1$, and ii) $f^{(k)}(x_{i+1})$ for $k = 0, 1, \ldots, n-1$. We define a composite Hermite quadrature as
\begin{multline*}
    \int_a^b f(x)\, dx \approx \sum_{i=0}^{m-1} \int_{x_i}^{x_{i+1}} H_n^{i}(f;\ x)\, dx \\= 
\sum_{i=0}^{m-1} \sum_{j=0}^{n-1} h_i^{j+1}\omega_j \LRs{(-1)^jf^{(j)}(x_{i+1}) + f^{(j)}(x_i)},
\end{multline*}
where
\[
\omega_j = n\sum_{k=j}^{n-1}\binom{k}{j}\frac{(n+k-j-1)!}{(n+k+1)!}.
\]

The error for the composite Hermite quadrature is simply a summation of the errors on all intervals, each of which is of the form we discussed above. We omit the details.
\section{Conclusion}

In this paper, we generalize  two-point interpolatory Hermite quadrature to functions with available values and the first $n-1$ derivatives at both end points. Using elementary integration by parts, we derive an exact error expression for the Hermite quadrature rule. We rigorously study the approach and its results. As a by-product of our study, we discovered that (shifted and unnormalized) Legendre polynomials are the error kernels of the Hermite quadrature rule, thus providing an elegant relationship between the classical Hermite interpolation and Legendre polynomails. Part of our finding is a rediscovery of the Rodrigues formula for Legendre polynomials. Besides these advantages over the classical derivation and error representation, our approach is valid over much broader class of integrands in $C^{(n)}\LRp{a,b}$ instead of $C^{(2n)}\LRs{a,b}$. For completeness, we also provide an alternative derivation using the Peano kernel theorem, and a composite Hermite quadrature rule.

\bibliographystyle{siamplain}
\bibliography{references}
\end{document}

%% file: ex_shared.tex
\usepackage{lipsum}
\usepackage{amsfonts}
\usepackage{graphicx}
\usepackage{epstopdf}

\usepackage{algorithmic}
\ifpdf
  \DeclareGraphicsExtensions{.eps,.pdf,.png,.jpg}
\else
  \DeclareGraphicsExtensions{.eps}
\fi

\newsiamremark{remark}{Remark}
\newsiamremark{hypothesis}{Hypothesis}
\crefname{hypothesis}{Hypothesis}{Hypotheses}
\newsiamthm{claim}{Claim}
\newsiamremark{fact}{Fact}
\crefname{fact}{Fact}{Facts}

\headers{On the Error of Hermite Quadrature: An
Elementary Proof}{Tan Bui-Thanh, Giancarlo Villatoro, and C G Krishnanunni}

\title{From an Elementary Proof of Error Representation for Hermite Quadrature to a Rediscovery of Legendre Polynomials and Rodrigues Formula\thanks{\href{https://users.oden.utexas.edu/~tanbui/books/HermiteQuadrature.pdf}{\textcolor{red}{Part of this work}} was carried out during Moncrief Summer Internship Program 2025 at the Oden Institute for Computational Engineering and Sciences.
}}

\author{Tan Bui-Thanh\thanks{Oden Institute for Computational Engineering $\&$ Sciences; Department of Aerospace Engineering $\&$ Engineering Mechanics, University of Texas at Austin
  (\email{tanbui@oden.utexas.edu}).}
\and Giancarlo Villatoro\thanks{Department of Mathematics, University of Texas at Austin, 
  (\email{ggv258@my.utexas.edu}.}
\and C G Krishnanunni\thanks{Department of Aerospace Engineering $\&$ Engineering Mechanics, University of Texas at Austin, 
  (\email{krishnanunni@utexas.edu}.}}

\usepackage{amsopn}


%% file: references.bib
@book{Iserles2008FirstCourseNADE,
  author    = {Iserles, Arieh},
  title     = {A First Course in the Numerical Analysis of Differential Equations},
  edition   = {2},
  publisher = {Cambridge University Press},
  address   = {Cambridge},
  year      = {2008},
  isbn      = {9780521734905}
}

@book{Scott2011NumericalAnalysis,
  author    = {Scott, L. Ridgway},
  title     = {Numerical Analysis},
  publisher = {Princeton University Press},
  address   = {Princeton, NJ},
  year      = {2011},
  isbn      = {978-0691146867}
}

@book{folland1999real,
  author    = {Gerald B. Folland},
  title     = {Real Analysis: Modern Techniques and Their Applications},
  edition   = {2},
  publisher = {John Wiley \& Sons},
  year      = {1999},
  isbn      = {978-0471317166}
}

@book{hirsch1976differential,
  author    = {Morris W. Hirsch},
  title     = {Differential Topology},
  publisher = {Springer},
  year      = {1976},
  series    = {Graduate Texts in Mathematics},
  volume    = {33},
  doi       = {10.1007/978-1-4684-9449-5}
}

@book{StoerBulirsch1993,
  author    = {Stoer, Josef and Bulirsch, Roland},
  title     = {Introduction to Numerical Analysis},
  edition   = {2},
  publisher = {Springer},
  year      = {1993},
  series    = {Texts in Applied Mathematics},
  volume    = {12},
  doi       = {10.1007/978-1-4757-2101-1}
}

@book{Davis1975,
  author    = {Davis, Philip J.},
  title     = {Interpolation and Approximation},
  publisher = {Dover Publications},
  year      = {1975}
}

@book{GasperRahman2004,
  author    = {Gasper, George and Rahman, Mizan},
  title     = {Basic Hypergeometric Series},
  edition   = {2},
  publisher = {Cambridge University Press},
  year      = {2004},
  series    = {Encyclopedia of Mathematics and its Applications},
  volume    = {96}
}

@techreport{Lourakis2007,
  author      = {Lourakis, Manolis I. A.},
  title       = {Interpolating Using {H}ermite Splines},
  institution = {Institute of Computer Science, FORTH},
  year        = {2007},
  url         = {https://www.ics.forth.gr}
}

@article{Schumaker1973,
  author  = {Schumaker, Larry L.},
  title   = {On {H}ermite interpolation by splines},
  journal = {Journal of Approximation Theory},
  volume  = {9},
  number  = {1},
  pages   = {2--29},
  year    = {1973},
  doi     = {10.1016/0021-9045(73)90003-0}
}

@incollection{BirkhoffDeBoor1964,
  author    = {Birkhoff, Garrett and de Boor, Carl},
  title     = {Piecewise Polynomial Interpolation and Approximation},
  booktitle = {Approximation of Functions},
  editor    = {Hull, T. E.},
  pages     = {164--190},
  publisher = {Elsevier},
  year      = {1964}
}

@book{quarteroni2006numerical,
  title={Numerical mathematics},
  author={Quarteroni, Alfio and Sacco, Riccardo and Saleri, Fausto},
  volume={37},
  year={2006},
  publisher={Springer Science \& Business Media}
}

@book{apostol1967one,
  title={One-variable calculus, with an introduction to linear algebra},
  author={Apostol, Tom M},
  volume={1},
  year={1967},
  publisher={Blaisdell Waltham, Toronto, London}
}

@misc{abromowitz1972handbook,
  title={Handbook of mathematical functions},
  author={Abromowitz, Milton and Stegun, Irene A},
  year={1972},
  publisher={Dover, New York}
}

@article{cruz2003elementary,
  title={An elementary proof of error estimates for the trapezoidal rule},
  author={Cruz-Uribe, David and Neugebauer, CJ},
  journal={Mathematics magazine},
  volume={76},
  number={4},
  pages={303--306},
  year={2003},
  publisher={Taylor \& Francis}
}

@book{alma991022147289706011,
title = {Interpolation and approximation},
author = {Davis, Philip J.},
publisher = {Dover Publications},
pages = {37},
year = {1975},
}

@book{atkinson,
title = {An introduction to numerical analysis},
author = {Atkinson, Kendall E.},
publisher = {Wiley},
year = {1978},
pages = {159 -- 161}
}

@article{LampretVito2004AItH,
author = {Lampret, Vito},
address = {PHILADELPHIA},
copyright = {Copyright 2004 Society for Industrial and Applied Mathematics},
issn = {0036-1445},
journal = {SIAM review},
language = {eng},
number = {2},
pages = {311-328},
publisher = {Society for Industrial and Applied Mathematics},
title = {An Invitation to Hermite's Integration and Summation: A Comparison between Hermite's and Simpson's Rules},
volume = {46},
year = {2004},
}

@article{ELSINGERJASONR.2007AEPO,
author = {ELSINGER, JASON R.},
issn = {0031-952X},
journal = {Pi Mu Epsilon journal},
language = {eng},
number = {6},
pages = {353-357},
publisher = {National Honorary Mathematics Society},
title = {AN ELEMENTARY PROOF OF THE SIMPSON'S RULE ERROR FORMULA},
volume = {12},
year = {2007},
}

@misc{HaiD.D.2008AEPo,
author = {Hai, D. D. and Smith, R. C.},
address = {Washington},
copyright = {Copyright 2008 Mathematical Association of America (Incorporated)},
issn = {0025-570X},
journal = {Mathematics magazine},
language = {eng},
number = {4},
pages = {295-300},
publisher = {Mathematical Association of America},
title = {An Elementary Proof of the Error Estimates in Simpson's Rule},
volume = {81},
year = {2008},
}
